\newtheorem{thm}{Theorem}[section]
\newtheorem{def.}{Definition}[section]
\newtheorem{prop}{Proposition}[section]
\newtheorem{cor}{Corollary}[section]
\newtheorem{lem}{Lemma}[section]
\numberwithin{table}{section}
\begin{document}

\title{The Delunification Process and Minimal Diagrams}
\author{Slavik Jablan\\
        The Mathematical Institute\\
        Knez Mihailova 36\\
        P.O. Box 367, 11001, Belgrade\\
        Serbia\\
        \texttt{sjablan@gmail.com}\\
        and\\
        Louis H. Kauffman\\
        Department of Mathematics, Statistics and Computer Science\\
        University of Illinois at Chicago\\
        851 S. Morgan St., Chicago IL 60607-7045\\
        USA\\
        \texttt{kauffman@uic.edu}\\
        and\\
        Pedro Lopes\\
        Center for Mathematical Analysis, Geometry and Dynamical Systems\\
        Department of Mathematics\\
        Instituto Superior T\'ecnico, Universidade de Lisboa\\
        1049-001 Lisboa\\
        Portugal\\
        \texttt{pelopes@math.tecnico.ulisboa.pt}\\
}
\date{June 09, 2014}
\maketitle

\begin{abstract}
A link diagram is said to be lune-free if, when viewed as  a 4-regular plane graph it does not have multiple edges between any pair of nodes. We prove that any colored link diagram is equivalent to a colored lune-free diagram with the same number of colors. Thus any colored link diagram with a  minimum number of colors (known as a minimal diagram) is equivalent to a colored lune-free diagram with that same number of colors. We call the passage from a link diagram to an equivalent lune-free diagram its delunification process.

We then  introduce a notion of grey sets in order to obtain  higher lower bounds for minimum number of colors. We calculate these higher lower bounds for a number of prime moduli with the help of computer programs.

For each number of crossings through 16, we list the lune-free diagrams and we color them. If the number of colors equals the corresponding higher lower bound we know we have a minimum number of colors. We also introduce and list the lune-free crossing number of a link i.e., the minimum number of crossings needed for a lune-free diagram of this link, and other related link invariants.
\end{abstract}

\bigbreak

Keywords: links, colorings, lune-free diagrams, grey sets, lune-free crossing numbers.

\bigbreak

MSC 2010: 57M27

\bigbreak

\section{Introduction.}

\noindent

In this article we consider Fox colorings of link diagrams \cite{CFox} and their minimality properties.
A Fox coloring is a labeling of the arcs of the link diagram with elements of the integers modulo $m$ for an appropriate modulus $m.$ Such colorings can be regarded as labelings in a quandle with operation
$a*b = 2b -a,$ and are related to properties of the classical double-branched covering space with branch set that knot or link. There are many minimality questions about such colorings, since it is often the case that, for a given diagram, not all $m$ elements of the modular arithmetic are needed to color that diagram. Thus we consider the minimum coloring number of a knot or link to be the least number of colors that suffice to produce a non-trivial coloring (among all possible diagrams for the link) in a given modulus $m$, notation, $mincol_m\, L$, for a link or knot $L$.
\bigbreak

A link diagram is said to be {\it lune-free} if it does not have any two-sided regions
\cite{EliahouHararyKauffman}.
An equivalent term for lune-free is to say that the underlying flat diagram is a {\it Conway polyhedron}.
This terminology originated with J. H. Conway's paper \cite{Conway} in which he used a few basic polyhedra and insertions of rational diagrams in them, to produce the complete tables of knots up through ten crossings. Since that time it has been recognized that lune-free diagrams form a core structure for the class of all link diagrams.
\bigbreak

In this article we prove that if a link, $L$, admits a non-trivial coloring modulo a positive integer $m$, then there is a lune-free diagram of this link which supports such a non-trivial coloring using the minimum number of colors, $mincol_m L$. This is the consequence of the Main Lemma that we prove below which shows that  a colored lune is eliminated by a given finite sequence of colored Reidemeister moves which preserves the number of colors. Since for any link, $L$, admitting non-trivial colorings mod $m$, there is a diagram supporting a non-trivial coloring using the $mincol_m L$ colors, then if this diagram is not a lune-free diagram, we can use the finite sequence of moves described in the Main Lemma to obtain a lune-free diagram of $L$ using $mincol_m L$. It entails another interesting result: any link can be represented by a lune-free diagram.  We call this passage from a link diagram to an equivalent lune-free diagram the \emph{delunification process}.

\begin{thm}\label{thm:main}
Let $m$ be a positive integer greater than $1$. Let $L$ be a link admitting non-trivial $m$-colorings. There is a lune-free diagram of $L$ which supports a non-trivial $m$-coloring using the least number of colors, $mincol_m L$.
\end{thm}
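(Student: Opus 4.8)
The plan is to start from a diagram that already achieves the minimum color count and then strip off its lunes one at a time using the Main Lemma, keeping the number of colors fixed throughout; the whole argument is an induction on the number of lunes of the diagram.

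First I would fix, using the definition of $mincol_m L$ together with the standing hypothesis that $L$ admits non-trivial $m$-colorings, a diagram $D$ of $L$ carrying a non-trivial Fox $m$-coloring $c$ that uses exactly $mincol_m L$ colors; note in particular that $mincol_m L\ge 2$. If $D$ happens to be lune-free, there is nothing to do. Otherwise I would choose one lune of $D$ and invoke the Main Lemma, which supplies a finite sequence of colored Reidemeister moves taking $(D,c)$ to a colored diagram $(D_1,c_1)$ of $L$ in which that lune has been removed and the number of colors is unchanged, hence still equal to $mincol_m L$. Since $c_1$ uses $mincol_m L\ge 2$ colors it is again non-trivial, so the hypotheses needed to apply the Main Lemma are back in force and I can iterate. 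This produces a finite chain $(D,c)=(D_0,c_0),(D_1,c_1),\dots,(D_N,c_N)$ of colored diagrams of $L$, each using exactly $mincol_m L$ colors, with $D_N$ lune-free; $D_N$ is then the diagram asserted by the theorem.

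The one point that must be verified for this iteration to terminate is that each application of the Main Lemma strictly lowers the number of two-sided regions — that is, that the prescribed sequence of moves removes the chosen lune without creating new ones — and this is part of what the Main Lemma (or at any rate its proof) is meant to deliver. I would therefore phrase the induction on that quantity and cite the Main Lemma for the inductive step.

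Consequently all the real content sits inside the Main Lemma, and I expect the main obstacle to be there rather than in the deduction above. The easy case is a \emph{reducible} lune, whose two crossings exhibit the same strand as overstrand (equivalently as understrand): a single Reidemeister II move deletes it, this move plainly does not raise the number of colors, and it cannot lower it below $mincol_m L$ since we are already at the minimum. The genuinely hard case is a \emph{clasp} lune, where the two crossings have opposite over/under patterns and no local move removes the two-sided region; there one must route an arc of the rest of the diagram past the clasp using Reidemeister II and III moves, and the delicate part is to do this (i) with no net increase in the number of lunes, so that the induction is well founded, and (ii) so that whenever a reducible bigon is ultimately collapsed the colors on the vanishing arcs already occur elsewhere, guaranteeing that the color count never drops below $mincol_m L$.
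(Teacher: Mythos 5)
Your proposal is correct and follows essentially the same route as the paper: take a diagram realizing $mincol_m L$ and repeatedly apply the Main Lemma, whose guarantee that no new lunes are created and that the color count is preserved makes the induction on the number of lunes terminate in a lune-free diagram with the same number of colors. Your closing discussion of reducible versus clasp lunes concerns the internals of the Main Lemma itself (which the paper establishes only by a figure), not the deduction of the theorem from it, and does not change the fact that the two arguments coincide.
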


The proof of Theorem \ref{thm:main} is an immediate consequence of the Main Lemma (Lemma \ref{lemma: main}) which is proved below in Section 2.

The relevance of the existence of lune-free diagrams supporting minimal colorings is that we may search for minimum number of colors in the smaller subclass of lune-free diagrams. Also, due to their rigidity, it is easier to list the lune-free diagrams of a given number of crossings than to list all the diagrams for this number of crossings.
\bigbreak
The study of minimum number of colors initiated with the article \cite{Frank}, and was carried on in a number of other articles where the authors try to obtain estimates for the minimum number of colors for links of specific families (\cite{kl, klgame,  lm1}), or try to prove that links admitting non-trivial colorings on a given modulus all have the same minimum number of colors. The latter statement is in fact the case for moduli $2$, $3$, $5$, and $7$ (\cite{satoh, Oshiro, Saito, lm}). Moreover, for each of these moduli, there is a specific set of colors, whose cardinality is the minimum number of colors for the modulus at stake, with which such a minimal coloring can be assembled. But in \cite{lopesp11} it is proved that at $p=11$ this pattern breaks down. Specifically, knots $6_2$ and $7_2$ both admitting non-trivial $11$-colorings, require distinct sets of colors in order to assemble minimal $11$-colorings. Although the cardinality of these minimal sets of colors is $5$ for both $6_2$ and for $7_2$, this raises the following question. Does the minimum number of colors depend exclusively on the modulus at stake? That is to say, are there distinct  knots (or links), $L$ and $L'$, both admitting non-trivial $p$-colorings but such that $mincol_p\, L \neq mincol_p\, L'$? These questions led us to trying to determine minimum number of colors for moduli higher than $7$ which in turn gave rise to the current article.
\bigbreak

{\bf Remark} We warn the reader that any link in this article is considered to have non-null determinant. As a matter of fact, links with null determinant admit non-trivial colorings on any modulus.  We therefore think of them as forming a special class of links which we plan on addressing in a separate article.

\bigbreak
We remark that most of the results of this article and the new definitions go over to the class of virtual knots \cite{VKT}. Coloring is defined for virtual knots in the same way as we have defined it in this article (by a relation at each classical crossing). Virtual crossings do not entail an extra coloring relation. We define a {\it lune} in a virtual diagram to be a region in that diagram with two sides, whose crossings are {\it both} classical. Then it is clear that our methods for
delunification apply for virtual diagrams, since they use local modifications that are not affected by the
presence of virtual crossings. Examples and consequences of these remarks for virtual knot theory will be the subject of a separate paper.
\bigbreak

The article is organized as follows. In Section \ref{sect:delun} we show how to obtain a lune-free diagram from a diagram equipped with a non-trivial coloring, while preserving the number of colors. Colors apart, this is the delunification process of a diagram. We also explore different ways of delunifying diagrams and estimate the excess of crossings that each one brings about. This in turn leads us to defining three new notions of crossing numbers. In Section \ref{sect:grey} we introduce the notion of grey sets in order to obtain higher lower bounds for the minimum number of colors and calculate these higher lower bounds for prime moduli through $43$. In Section \ref{sect:algo} we discuss the algorithms employed in the listing of the lune-free diagrams and present tables with the values obtained for the minimum number of colors, and for the distinct minimum number of crossings.

\bigbreak
\section{The delunification process and minimal diagrams.}\label{sect:delun}

\noindent

We start by defining a few notions which will  simplify the statements of our results.

\begin{def.}\label{def:minimal coloring}$[\emph{$m$-Minimal Coloring of $L$}]$
Let $m$ be a positive integer, let $L$ be a link admitting non-trivial $m$-colorings. An \emph{$m$-Minimal Coloring of $L$} is a diagram of $L$ equipped with a non-trivial $m$-coloring using $mincol_m L$ colors. $(m$ and/or $L$ will be dropped from $m$-Minimal Coloring of $L$ whenever $m$ and/or $L$ are clear from context.$)$
\end{def.}

\begin{def.}\label{def:colReid}$[\emph{$m$-Colored Reidemeister Moves}]$
Let  $D$ be a link diagram equipped with a coloring over a given modulus $m$. An \emph{$m$-Colored Reidemeister Move} on $D$ is a Reidemeister move performed on $D$ along with the unique reassignment of colors to the arcs brought about by the Reidemeister move such that the new diagram is also equipped with a coloring mod $m$. This new coloring coincides with the former coloring in the arcs of the diagram not affected by the Reidemeister move $($\cite{pLopesqft1}$)$. $(m$ will be dropped from $m$-Colored Reidemeister Move whenever it is clear from context which $m$ is at issue.$)$
\end{def.}

\begin{def.}\label{def:maxtassel}$[\emph{$k$-Tassel; Maximal Tassel; Sub-Tassel; Isolated Lune.}]$
Let  $D$ be a link diagram.

A \emph{$k$-Tassel} is a portion of $D$ which is isotopic on the plane to $\sigma_1^k$ where $k$ is a non-null integer and $\sigma_1$ is a generator of the braid group corresponding to the second strand going over the first strand. $($The $k$- will be dropped whenever it is not meaningful or it is clear from context.$)$

A \emph{Maximal Tassel} is a Tassel which is not part of a larger Tassel in the diagram under consideration.

A \emph{Sub-Tassel} or \emph{Non-Maximal Tassel} is a Tassel which is a proper part of a larger Tassel in the diagram under consideration.

An \emph{Isolated Lune} in a diagram $D$ is a maximal $2$-tassel of $D$.

Lunes in a $($sub-$)$tassel will be referred to as \emph{consecutive lunes}.

$($See Figure \ref{fig:defmaxisononmax} for illustrative examples.$)$
\end{def.}

\begin{figure}[!ht]
    \psfrag{iso}{\huge\text{Isolated Lune}}
    \psfrag{max}{\huge\text{(Non-Isolated Lune) Maximal Tassel}}
    \psfrag{nonmax}{\huge\text{(Non-maximal) Tassel}}
    \psfrag{2b-a1}{\Large$2b-a$}
    \psfrag{3b-2a}{\huge$3b-2a$}
    \psfrag{3b-2a1}{\Large$3b-2a$}
    \psfrag{4b-3a}{\large$4b-3a$}
    \centerline{\scalebox{.47}{\includegraphics{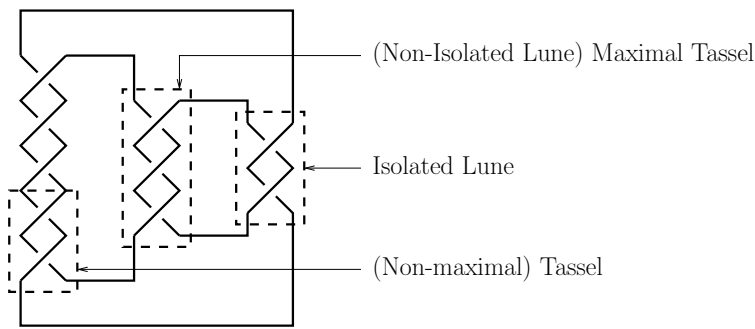}}}
    \caption{Illustrative Examples of Tassels (maximal and otherwise) and of Isolated Lune.}\label{fig:defmaxisononmax}
\end{figure}

\begin{lem}\label{lemma: main}$[\bf{Main\;  Lemma}]$ There is a sequence of colored Reidemeister moves that eliminates lunes in any colored link diagram while not creating new lunes and preserving the number of colors. This sequence of moves increases the number of crossings by $8$, per lune i.e., the neighborhood of the lune containing $2$ crossings, will contain, after this sequence, $2+8=10$ crossings.
\end{lem}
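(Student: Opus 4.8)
The plan is to work entirely locally, inside the disk neighborhood of a single isolated lune, and to exhibit an explicit finite sequence of colored Reidemeister moves whose net effect is to replace the $2$-crossing lune by a lune-free tangle carrying the same colors on its four endpoints and using no colors outside the set already present in the diagram. First I would fix notation for the colored lune: label the four arcs meeting the two crossings of the lune by their colors, say the two ``outer'' arcs get colors $a$ and $b$ and the middle arc of the $2$-tassel gets $2b-a$, exactly as in Figure \ref{fig:defmaxisononmax}. The key constraint to respect throughout is Definition \ref{def:colReid}: each move forces a unique recoloring, so I must verify that every intermediate diagram is legitimately colored mod $m$ and, crucially, that the colors appearing on the newly created arcs are among $\{a, b, 2b-a, \dots\}$, all of which are $\mathbb{Z}/m\mathbb{Z}$-linear combinations of $a$ and $b$ and hence already occur elsewhere in the coloring (near the crossings of the lune). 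This is what guarantees ``preserving the number of colors'': we never introduce a color not already used.

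Next I would perform the move sequence itself. The natural approach is: first apply a Reidemeister II move to push one strand of the lune across a nearby arc (or across itself via a finger move), then a Reidemeister III move to slide a crossing past the lune, and finally Reidemeister I moves (or a second RII) to clear the leftover curls — arranging things so that the two original crossings of the lune are ``absorbed'' into a configuration with no bigon. I would track the crossing count at each step; since the claim is a net increase of exactly $8$ (from $2$ to $10$), the bookkeeping must be: RII adds $2$, RIII is crossing-neutral, and I have room for a few more RII/RI additions, with the arithmetic forced to land on $+8$. I would present this as an explicit picture sequence (a figure with the colored intermediate stages), since the cleanest verification of both the flatness/lune-freeness of the output and the color propagation is diagrammatic. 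I must also check the local-to-global point: the disk I modify can be taken small enough that it meets the rest of the diagram only in the four boundary arcs, and the output tangle in that disk has no bigon, *and* it creates no bigon with the arcs outside (this is where ``not creating new lunes'' needs a word — the boundary arcs of the disk are unchanged, so no new bigon straddling the boundary is formed).

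The step I expect to be the main obstacle is the simultaneous satisfaction of all three requirements by a single sequence: (i) the final local tangle is genuinely lune-free (no residual bigon, and no bigon created against neighboring structure), (ii) the recoloring is consistent at every intermediate stage and uses only pre-existing colors, and (iii) the crossing count lands on exactly $10$, not merely ``$O(1)$ more''. Getting (i) and (iii) to agree typically requires choosing the ``right'' finger-move/RII that both breaks the bigon and is cheap; a naive sequence tends to either leave a new bigon or overshoot the crossing budget. I would handle this by committing to one specific canonical replacement tangle — essentially the standard way a Conway polyhedron absorbs a rational tangle insertion of two half-twists — computing its colors once from the boundary data $a, b$, and then exhibiting the Reidemeister path to it, rather than searching among sequences. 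A secondary subtlety is the case of consecutive lunes (a $k$-tassel with $k > 2$): strictly the Main Lemma is phrased per isolated lune, but I would remark that a maximal $k$-tassel is handled by treating it as a single block (one absorbs the whole twist region at once, or reduces $k$ by $2$ repeatedly), so that the $+8$ count is per lune eliminated and the process terminates after finitely many applications since each application strictly decreases the number of bigons in the diagram.
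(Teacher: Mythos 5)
Your overall strategy is exactly the paper's: the published proof of the Main Lemma consists of nothing but an explicit picture sequence (Figure \ref{fig:mainl}) showing a local replacement of the $2$-crossing lune by a $10$-crossing lune-free tangle with the boundary colors $a$, $b$, $2b-a$, $3b-2a$ preserved. The problem is that your proposal stops precisely where the proof begins. Everything you write is a description of what such a figure would have to accomplish --- local support in a disk, consistency of the forced recolorings, no residual or newly created bigon, a crossing count landing on exactly $+8$ --- together with an accurate diagnosis that a naive RII/RIII/RI sequence ``tends to either leave a new bigon or overshoot the crossing budget.'' That diagnosis is the whole difficulty, and you do not resolve it: no specific sequence of moves, and no specific replacement tangle, is ever exhibited. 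Saying ``I would commit to one canonical replacement tangle and exhibit the Reidemeister path to it'' is a promise to prove the lemma, not a proof of it. As it stands there is nothing one could check for the three properties you correctly list as the requirements.

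A secondary but genuine logical flaw: you argue that color preservation is automatic because the new arc colors ``are $\mathbb{Z}/m\mathbb{Z}$-linear combinations of $a$ and $b$ and hence already occur elsewhere in the coloring (near the crossings of the lune).'' The ``hence'' does not follow. Every color produced by colored Reidemeister moves is some combination $\lambda a + (1-\lambda) b$, but only the specific values $a$, $b$, $2b-a$ (and, for the outgoing arc, $3b-2a$) are guaranteed to be present near an isolated lune; a sequence that produces, say, $2a-b$ or $4b-3a$ would in general add a color (compare the paper's Figure \ref{fig:cormainl}, where exactly this happens and only $5$ crossings are added at the cost of a possible extra color). So the color-preservation claim must be verified against the particular move sequence chosen --- which again requires actually choosing one. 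Your closing remark about $k$-tassels is fine as an aside (the paper handles those separately in Lemma \ref{lemma:aux} and Corollary \ref{cor:count}), but it does not repair the central omission.
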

\begin{proof} See Figure \ref{fig:mainl}.
\begin{figure}[!ht]
    \psfrag{a}{\huge$a$}
    \psfrag{b}{\huge$b$}
    \psfrag{2b-a}{\huge$2b-a$}
    \psfrag{2b-a1}{\Large$2b-a$}
    \psfrag{3b-2a}{\huge$3b-2a$}
    \psfrag{3b-2a1}{\Large$3b-2a$}
    \centerline{\scalebox{.47}{\includegraphics{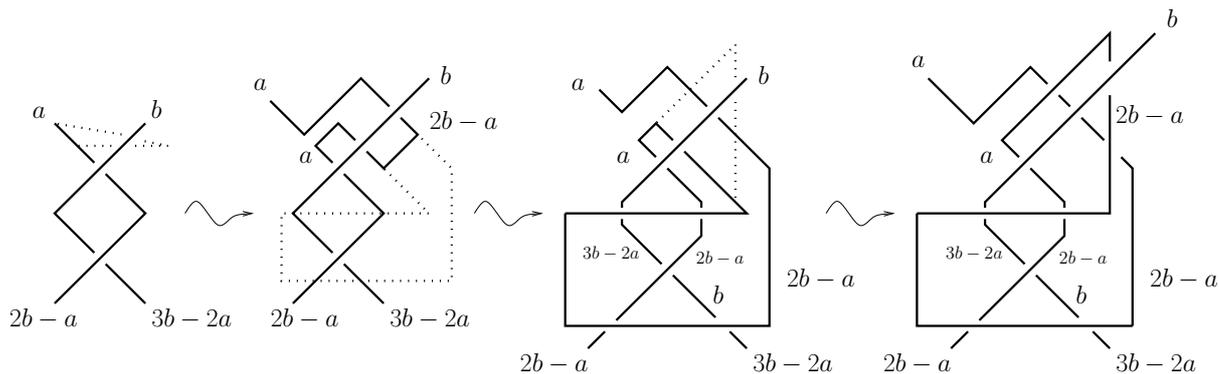}}}
    \caption{The Main Lemma: a sequence of colored Reidemeister moves which eliminates a lune while not creating new lunes and preserving the number of colors. The dotted lines indicate the move that realizes the passage to the diagram on the right.}\label{fig:mainl}
\end{figure}
\end{proof}

\begin{cor}\label{cor:lunefreediag} Any link can be represented by a lune-free diagram.
\end{cor}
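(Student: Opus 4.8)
The plan is to derive the statement directly from the Main Lemma (Lemma \ref{lemma: main}) by an induction on the number of lunes. Given a link $L$, I would start from any diagram $D$ of $L$ and equip it with the trivial $2$-coloring (all arcs receiving the same color), so that $D$ qualifies as a colored link diagram and Lemma \ref{lemma: main} may be invoked; the coloring plays no role in this corollary and is simply discarded at the end. Write $n(D)$ for the number of two-sided regions (lunes) of $D$; this is a finite non-negative integer, since a link diagram has only finitely many regions.

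The induction is on $n(D)$. If $n(D)=0$ the diagram $D$ is already lune-free and represents $L$, so there is nothing to prove. If $n(D)>0$, choose one lune of $D$ and apply to it the finite sequence of colored Reidemeister moves furnished by the Main Lemma, obtaining a diagram $D'$. Being the result of Reidemeister moves, $D'$ is again a diagram of $L$; and by the Main Lemma the chosen lune has been eliminated while no new lune has been created, so $n(D')=n(D)-1<n(D)$. By the induction hypothesis $D'$, and hence $L$, admits a lune-free diagram, which completes the argument.

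I do not expect a genuine obstacle here, since the substantive work is already packaged in the Main Lemma. The only points worth recording are that the induction is well-founded --- the lune count is a non-negative integer that strictly decreases at each step, even though the crossing number grows by $8$ each time, so the process terminates --- and that eliminating one lune leaves the remaining lunes intact, which is precisely the ``not creating new lunes'' clause of Lemma \ref{lemma: main} together with the fact that the modification is supported in a neighborhood of the chosen lune. Alternatively, if one reads the Main Lemma as removing every lune simultaneously (adding $8$ crossings per lune), then a single application already produces the desired lune-free diagram of $L$.
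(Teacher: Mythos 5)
Your argument is correct, and it is essentially the route the paper itself gestures at: the authors' proof of this corollary opens by saying the statement ``is implicit in the proof of the Main Lemma,'' which is exactly what you make explicit with your induction on the lune count (the trivial-coloring device to bring an uncolored diagram within the scope of the colored lemma is harmless, and your remark that termination is governed by the strictly decreasing number of lunes rather than the growing crossing number is the right thing to record). The one substantive difference is that the paper's written proof does not stop there: since color preservation is irrelevant for this corollary, the authors exhibit a \emph{different} local sequence of Reidemeister moves (their Figure \ref{fig:cormainl}) that also kills a lune without creating new ones but adds only $5$ crossings instead of $8$, at the price of possibly introducing one new color locally. So your proof buys nothing less in terms of the statement being proved, but the paper's version additionally sets up the more economical deluning move that is reused later (in Lemma \ref{lemma:aux} and the crossing counts of Corollary \ref{cor:count}); if all you want is existence of a lune-free diagram, your induction via the Main Lemma is complete as it stands.
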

\begin{proof} It is implicit in the proof of Lemma \ref{lemma: main} above but here is another sequence of Reidemeister moves which also eliminates lunes. This sequence, however, does not necessarily preserve the number of colors but involve a smaller increase in the number of crossings - see Figure \ref{fig:cormainl}.
\end{proof}
\begin{figure}[!ht]
    \psfrag{a}{\huge$a$}
    \psfrag{b}{\huge$b$}
    \psfrag{2a-b}{\huge$2a-b$}
    \psfrag{2b-a}{\huge$2b-a$}
    \psfrag{2b-a1}{\Large$2b-a$}
    \psfrag{3b-2a}{\huge$3b-2a$}
    \psfrag{3b-2a1}{\Large$3b-2a$}
    \psfrag{4b-3a}{\huge$4b-3a$}
    \centerline{\scalebox{.47}{\includegraphics{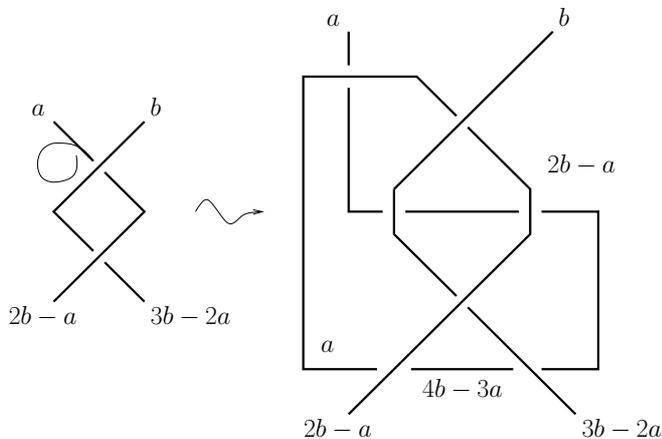}}}
    \caption{The Corollary to the Main Lemma: a sequence of colored Reidemeister moves which eliminates a lune. No more lunes are created and only $5$ extra crossings are produced. On the other hand one color is (locally) added. If this color is already present in another part of the diagram then no color is globally added by way of this sequence of moves. For instance (referring to diagram on the left), if arc colored $3b-2a$ (bottom right) crosses over arc colored $2b-a$ (bottom left) in a part of the diagram not displayed here, then no extra color is globally added - this argument will be used below in the proof of Lemma \ref{lemma:aux}.}\label{fig:cormainl}
\end{figure}

\begin{lem}\label{lemma:aux}$[\emph{Auxiliary Lemma}]$ Figures \ref{fig:cormainlv4}, \ref{fig:cormainlv5}, \ref{fig:cormainlv5+}, and \ref{fig:cormainlv6} indicate the sequences of colored Reidemeister moves which eliminate the consecutive lunes in colored maximal $3$-, $4$-, $5$-, and $6$-tassels (respectively)  without increasing the number of colors, and with less increase in the number of crossings than via the technique described in the Main Lemma $($Lemma \ref{lemma: main}$)$.
\end{lem}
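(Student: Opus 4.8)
The plan is to handle each of the maximal tassels of length $3,4,5,6$ by the same strategy that already worked in the Main Lemma and its Corollary, but applied to the whole tassel at once rather than lune-by-lune, so as to amortize the cost of the extra crossings. First I would record the coloring of a maximal $k$-tassel: if the two incoming arcs are colored $a$ and $b$ then, going through the $\sigma_1^k$ braid word, the arcs receive the successive labels $a,b,2b-a,3b-2a,\dots,kb-(k-1)a$, and the two outgoing arcs carry the last two of these values. Because the tassel is \emph{maximal}, the two crossings at each end cannot be absorbed into a longer tassel; this is exactly the hypothesis that lets us isotope (planar isotopy plus a single Reidemeister~II on the two bounding strands) so that the whole multi-lune region is pulled apart into a single twist region to which Reidemeister~I / II moves can be applied from the outside.

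Next I would carry out, for each $k\in\{3,4,5,6\}$, the explicit colored Reidemeister sequence depicted in the referenced figures. The mechanism is the one already used in Figure~\ref{fig:cormainl}: pulling a strand across the tassel replaces a string of $k-1$ consecutive lunes by a lune-free tangle, at the cost of some bounded number of new crossings, and it introduces at most the new local colors $2a-b$ and (at the other end) a value of the form $(k)b-(k-1)a$ or $2a-b$-type reflection; one then invokes the observation made in the Corollary's caption, namely that a color that is ``locally added'' is not ``globally added'' provided it already appears elsewhere in the tassel's span. For a $k$-tassel with $k\ge 3$ the colors $a,b,2b-a,\dots,kb-(k-1)a$ are all already present, and $2a-b$ is the reflection $2a-b=2a-b$ which equals the color that would sit on the far side of a hypothetical $(k{+}1)$st crossing read in the opposite direction — since $k\ge 3$ there is genuine room inside the tassel for these intermediate colors to already occur, so no color is added globally. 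I would then just tabulate the crossing count: the $k$-tassel starts with $k$ crossings and the sequence leaves $k+c_k$ crossings for an explicit small constant $c_k$ read off from each figure, and I would check in each of the four cases that $c_k < 8(k-1)$, i.e. that this beats applying the Main Lemma separately to each of the $k-1$ lunes.

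The only subtlety that needs care — and the step I expect to be the main obstacle — is verifying that the sequence \emph{does not create new lunes}, neither inside the former tassel region nor where the pulled strand re-enters the rest of the diagram. Inside, one must check the resulting tangle in each of the four figures really is a Conway polyhedron fragment (no bigons); this is a finite check but must be done honestly for $k=3,4,5,6$ since the tangles differ. At the boundary, the pulled strand runs alongside whatever arcs were adjacent to the tassel, and in principle could form a bigon with one of them; here I would appeal to maximality again — the arcs leaving a maximal tassel do not immediately re-cross each other (else the tassel would extend) — together with the freedom to route the pulled strand in a thin collar neighborhood of the original tassel boundary, which keeps it disjoint from everything except at the controlled new crossings. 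Finally, I would note that colored Reidemeister moves never change the \emph{set} of colors except through the explicit local additions already accounted for, so ``preserving / not increasing the number of colors'' follows once the ``globally added = none'' bookkeeping above is in place; assembling these three ingredients (coloring propagation, the figure-by-figure move sequences, and the no-new-lune plus no-new-color checks) completes the proof.
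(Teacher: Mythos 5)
Your overall architecture matches the paper's: the paper's proof of this lemma is literally the four figures, i.e.\ explicit colored Reidemeister sequences checked case by case for $k=3,4,5,6$, together with the crossing counts ($+5,+5,+6,+6$, each far below the $8(k-1)$ that repeated use of Lemma~\ref{lemma: main} would cost) and the color bookkeeping foreshadowed in the caption of Figure~\ref{fig:cormainl}. Your identification of the three things to verify (the move sequences, no new lunes, no new colors) is the right checklist, and your crossing-count comparison $c_k<8(k-1)$ is the correct test.

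The genuine gap is in your ``no color is globally added'' step. You claim the locally introduced color $2a-b$ is already present because it ``equals the color that would sit on the far side of a hypothetical $(k{+}1)$st crossing read in the opposite direction'' and ``there is genuine room inside the tassel.'' This is false: the colors carried by a $k$-tassel with inputs $a,b$ are exactly $a,b,2b-a,3b-2a,\dots,(k{+}1)b-ka$, i.e.\ $jb-(j-1)a$ for $j\ge 0$, and $2a-b$ corresponds to $j=-1$; one checks $2a-b=jb-(j-1)a$ forces $(j+1)(a-b)\equiv 0 \pmod m$, which does not hold in general. Worse, \emph{maximality} of the tassel is precisely the hypothesis that the crossing which would produce the color $2a-b$ does not exist, so maximality works against your argument rather than for it. The argument the paper actually relies on (stated in the caption of Figure~\ref{fig:cormainl}) runs the other way: the color locally added by the lune-removing move is $4b-3a$, and for a maximal $k$-tassel with $k\ge 3$ this color is guaranteed to be present already, namely on the arc emerging from the tassel's third crossing ($2(3b-2a)-(2b-a)=4b-3a$). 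That is exactly why the hypothesis $k\ge 3$ appears in the lemma and why the $2$-tassel (isolated lune) must instead be handled by the more expensive construction of the Main Lemma. As written, your bookkeeping would not establish ``without increasing the number of colors.''
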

\begin{proof} See Figures \ref{fig:cormainlv4}, \ref{fig:cormainlv5}, \ref{fig:cormainlv5+}, and \ref{fig:cormainlv6}.
\end{proof}

\begin{figure}[!ht]
    \psfrag{a}{\huge$a$}
    \psfrag{b}{\huge$b$}
    \psfrag{2a-b}{\huge$2a-b$}
    \psfrag{2b-a}{\huge$2b-a$}
    \psfrag{2b-a1}{\Large$2b-a$}
    \psfrag{3b-2a}{\huge$3b-2a$}
    \psfrag{3b-2a1}{\Large$3b-2a$}
    \psfrag{4b-3a}{\huge$4b-3a$}
    \centerline{\scalebox{.47}{\includegraphics{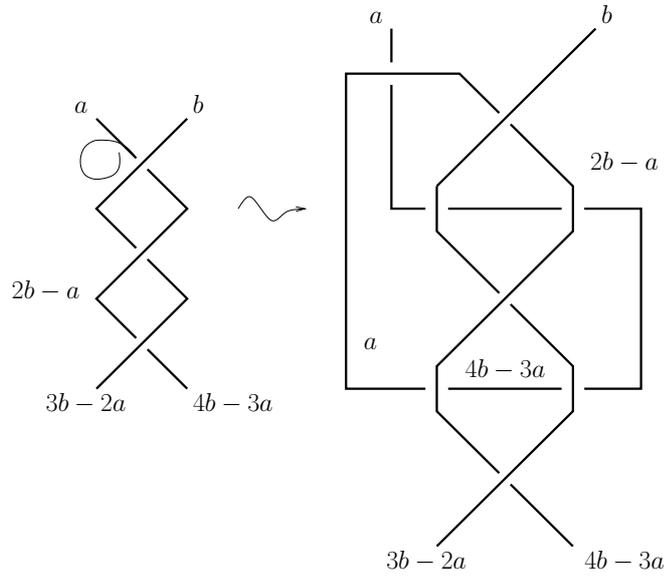}}}
    \caption{A sequence of Reidemeister moves which eliminates the lunes in a colored maximal $3$-tassel, preserving the number of colors and increasing the number of crossings by $5$.}\label{fig:cormainlv4}
\end{figure}
\begin{figure}[!ht]
    \psfrag{a}{\huge$a$}
    \psfrag{b}{\huge$b$}
    \psfrag{2a-b}{\huge$2a-b$}
    \psfrag{2b-a}{\huge$2b-a$}
    \psfrag{2b-a1}{\Large$2b-a$}
    \psfrag{3b-2a}{\huge$3b-2a$}
    \psfrag{3b-2a1}{\Large$3b-2a$}
    \psfrag{4b-3a}{\huge$4b-3a$}
    \psfrag{5b-4a}{\huge$5b-4a$}
    \centerline{\scalebox{.47}{\includegraphics{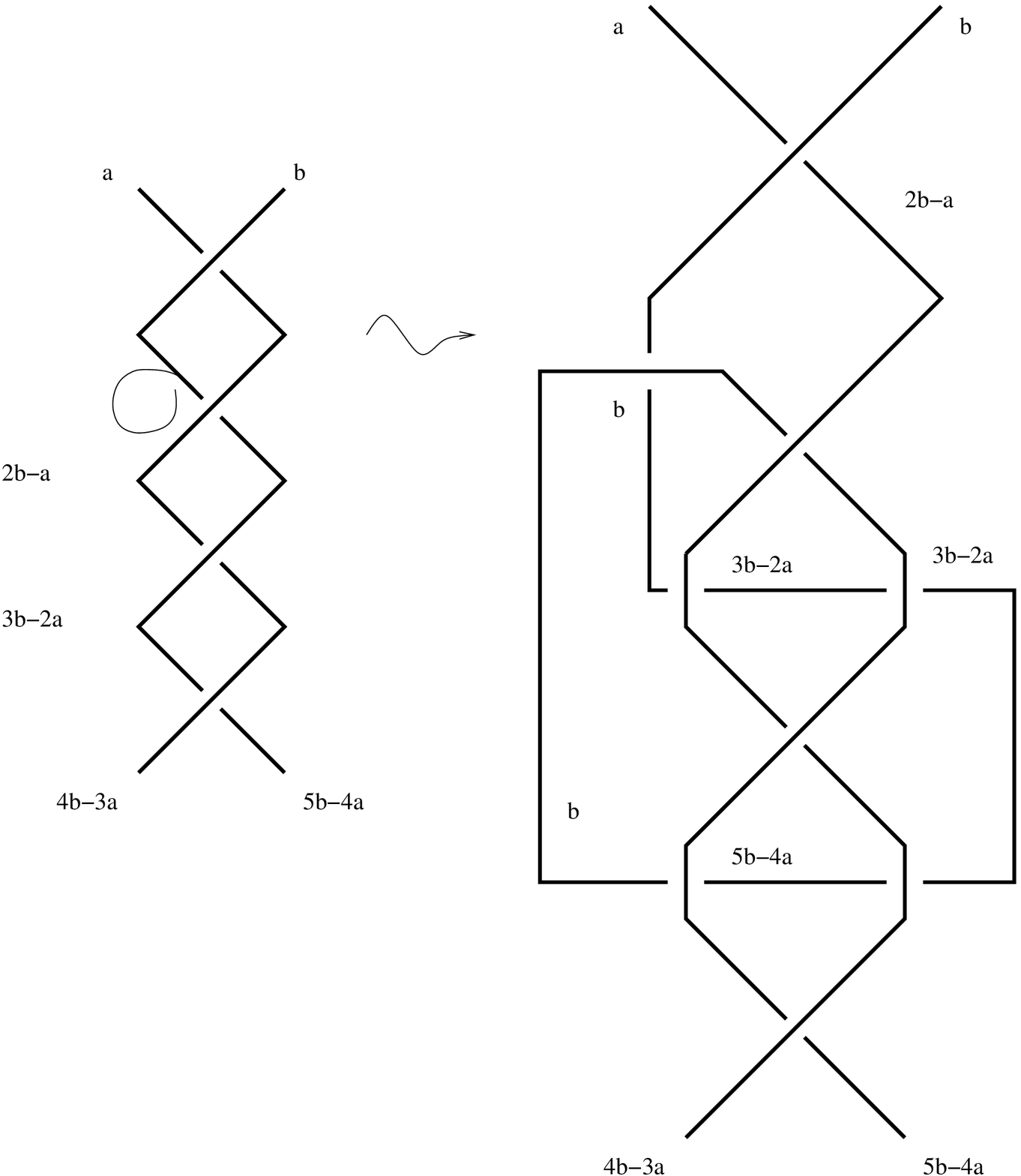}}}
    \caption{A sequence of Reidemeister moves which eliminates the lunes in a colored maximal $4$-tassel, preserving the number of colors and increasing the number of crossings by $5$.}\label{fig:cormainlv5}
\end{figure}
\begin{figure}[!ht]
    \psfrag{a}{\huge$a$}
    \psfrag{b}{\huge$b$}
    \psfrag{2a-b}{\huge$2a-b$}
    \psfrag{2b-a}{\huge$2b-a$}
    \psfrag{2b-a1}{\Large$2b-a$}
    \psfrag{3b-2a}{\huge$3b-2a$}
    \psfrag{3b-2a1}{\Large$3b-2a$}
    \psfrag{3b-2a1}{\Large$3b-2a$}
    \psfrag{4b-3a}{\huge$4b-3a$}
    \psfrag{4b-3a1}{\Large$4b-3a$}
    \psfrag{5b-4a}{\huge$5b-4a$}
    \psfrag{5b-4a1}{\Large$5b-4a$}
    \psfrag{6b-5a}{\huge$6b-5a$}
    \psfrag{6b-5a1}{\Large$6b-5a$}
    \psfrag{7b-6a1}{\Large$7b-6a$}
    \psfrag{7b-6a}{\huge$7b-6a$}
    \centerline{\scalebox{.47}{\includegraphics{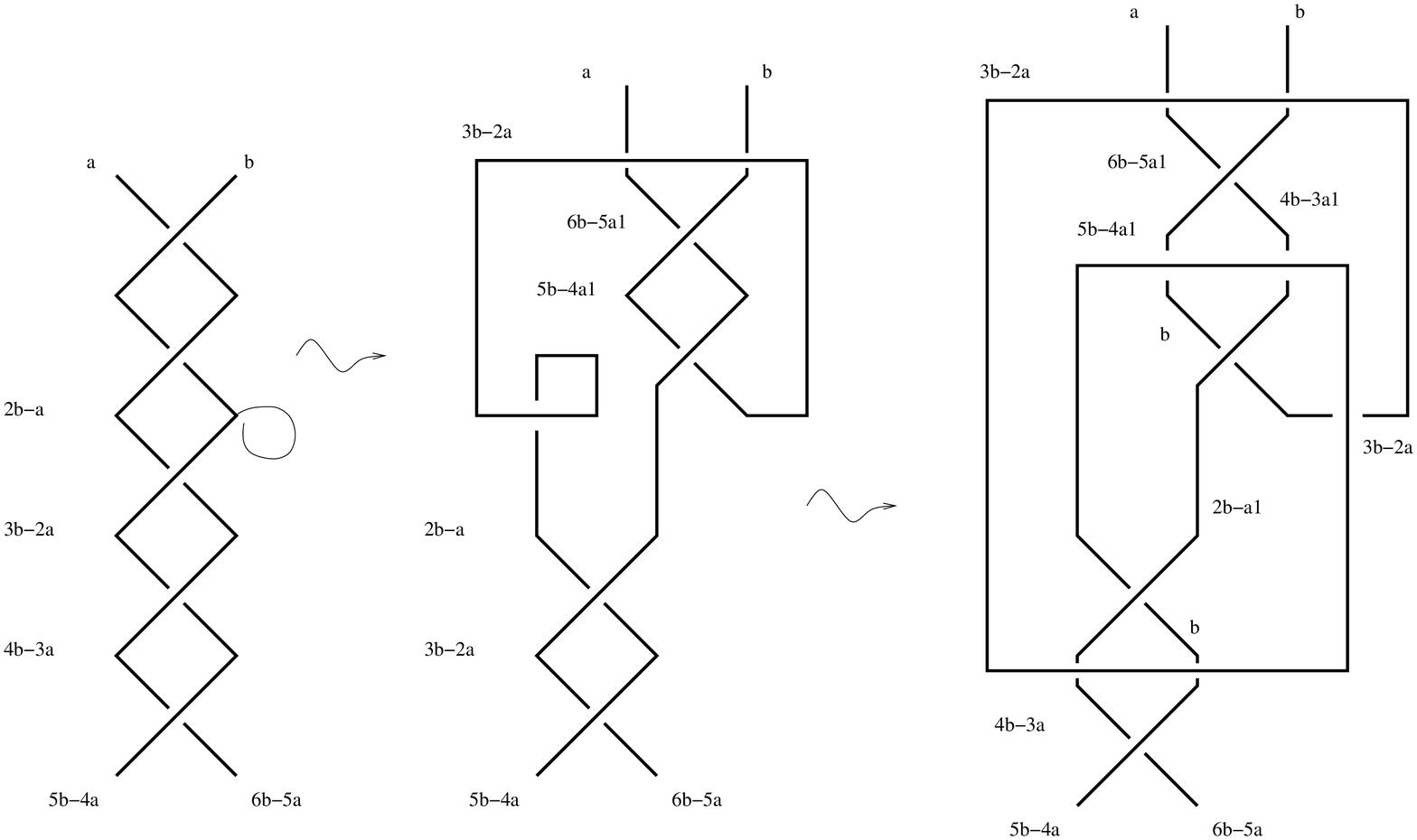}}}
    \caption{A sequence of Reidemeister moves which eliminates the lunes in a colored maximal $5$-tassel, preserving the number of colors and increasing the number of crossings by $6$.}\label{fig:cormainlv5+}
\end{figure}
\begin{figure}[!ht]
    \psfrag{a}{\huge$a$}
    \psfrag{b}{\huge$b$}
    \psfrag{2a-b}{\huge$2a-b$}
    \psfrag{2b-a}{\huge$2b-a$}
    \psfrag{2b-a1}{\Large$2b-a$}
    \psfrag{3b-2a}{\huge$3b-2a$}
    \psfrag{3b-2a1}{\Large$3b-2a$}
    \psfrag{3b-2a1}{\Large$3b-2a$}
    \psfrag{4b-3a}{\huge$4b-3a$}
    \psfrag{5b-4a}{\huge$5b-4a$}
    \psfrag{5b-4a1}{\Large$5b-4a$}
    \psfrag{6b-5a}{\huge$6b-5a$}
    \psfrag{6b-5a1}{\Large$6b-5a$}
    \psfrag{7b-6a1}{\Large$7b-6a$}
    \psfrag{7b-6a}{\huge$7b-6a$}
    \centerline{\scalebox{.47}{\includegraphics{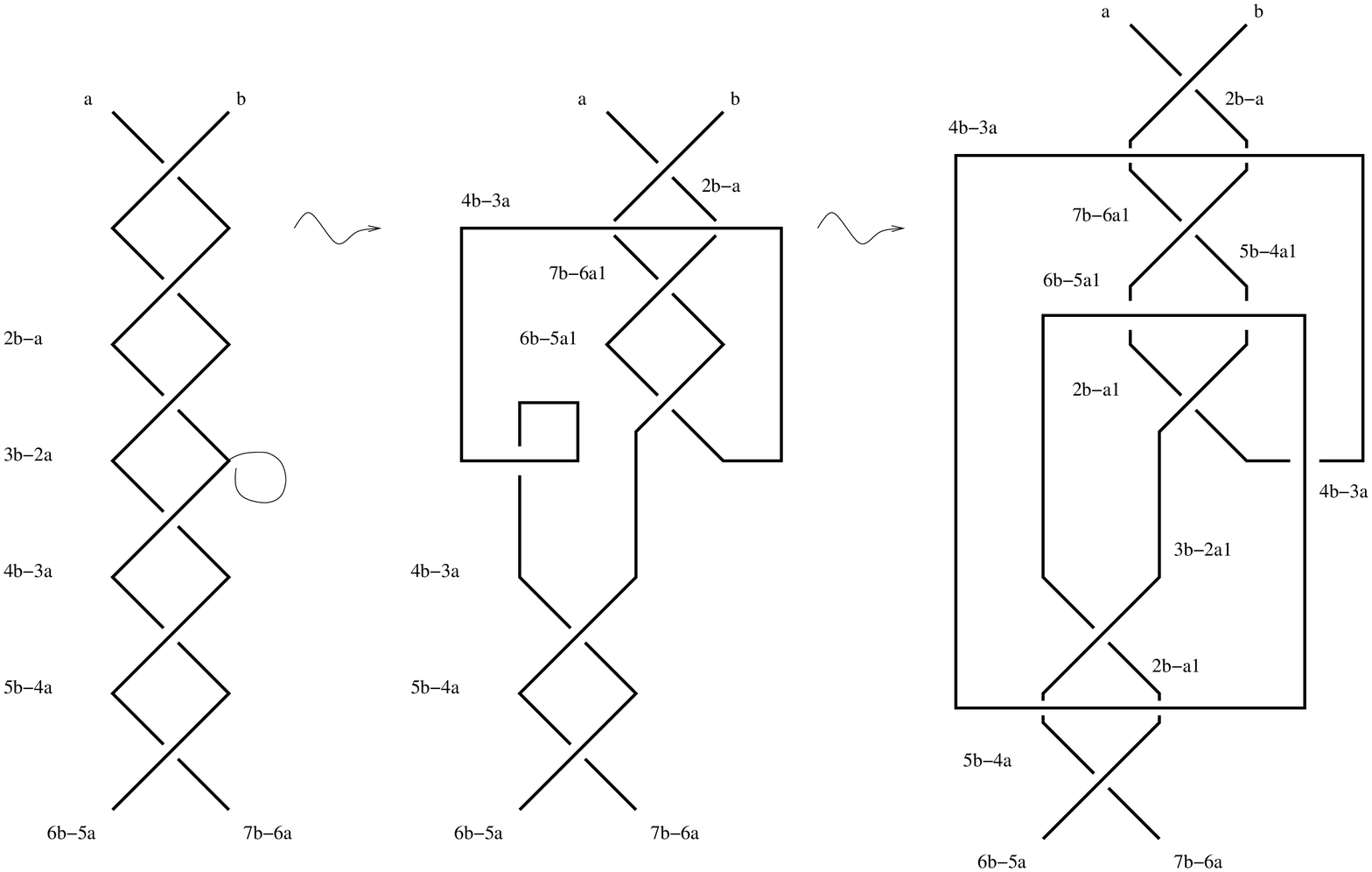}}}
    \caption{A sequence of Reidemeister moves which eliminates the lunes in a colored maximal $6$-tassel, preserving the number of colors and increasing the number of crossings by $6$.}\label{fig:cormainlv6}
\end{figure}

At this point the following remarks seem to be in order. A maximal $k$-tassel involves $k-1$ consecutive lunes.  Figure \ref{fig:cormainlv5} describes a technique for eliminating $3$ consecutive lunes; Figure \ref{fig:cormainlv4} is an application of this technique to eliminating $2$ consecutive lunes. Figure \ref{fig:cormainlv6} describes a technique for eliminating $5$ consecutive lunes; Figure \ref{fig:cormainlv5+} is an application of this technique to eliminating $4$ consecutive lunes.

\begin{cor}\label{cor:count}$[\emph{Counting extra crossings}]$
Consider a maximal $n$-tassel in a colored diagram. The application of Auxiliary Lemma \ref{lemma:aux} in the elimination of the colored lunes of this tassel brings about the following increase in the number of crossings $(n\geq 6)$.
\begin{itemize}
\item $6k$ \qquad\quad \; if $n=6k$;
\item $6k+4$ \qquad if $n=6k+1$, or $n=6k+2$;
\item $6k+5$ \qquad if $n=6k+3$, or $n=6k+4$;
\item $6k+6$ \qquad if $n=6k+5$.
\end{itemize}
\end{cor}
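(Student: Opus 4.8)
The plan is to leverage the two basic tools already established: the Auxiliary Lemma (Lemma \ref{lemma:aux}), which gives us the explicit crossing counts for eliminating the consecutive lunes in maximal tassels of length $3,4,5,6$, together with the observation recorded right before the corollary that the $5$-tassel technique (Figure \ref{fig:cormainlv6}) actually eliminates $5$ consecutive lunes, and the $4$-tassel technique (Figure \ref{fig:cormainlv5}) eliminates $3$ consecutive lunes, with a fixed overhead regardless of how the freed-up crossings are arranged. Concretely, a maximal $n$-tassel contains $n-1$ consecutive lunes, and the strategy is to partition these $n-1$ lunes into blocks that each can be handled by one of the four techniques, then sum the overhead contributions block by block. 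Since the $6$-tassel technique clears $5$ lunes at a cost of $+6$ crossings and the $5$-tassel technique clears $4$ lunes (see Figure \ref{fig:cormainlv5+}) at a cost of $+6$, these are the most ``efficient'' large blocks; the residual lunes are mopped up with the cheaper small-block techniques.

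The key steps, in order, would be: (i) Fix $n\geq 6$ and write $n = 6k + r$ with $r\in\{0,1,2,3,4,5\}$; the number of consecutive lunes is $n-1 = 6k + (r-1)$. (ii) For the bulk, apply the $6$-tassel technique to $(k-1)$ disjoint blocks of $5$ consecutive lunes each, costing $6(k-1)$ crossings, and dispose of a total of $5(k-1)$ lunes. (iii) The remaining $6k+(r-1) - 5(k-1) = k + r + 4$ lunes must be cleared by one further application tuned to $r$: for $r=0$ we have $k+4$ lunes left but we want to total $6k$, so in fact the cleaner bookkeeping is to treat each residue class $r$ separately, choosing the final block(s) so that the accumulated overhead matches the stated value — $6k$ when $r=0$, $6k+4$ when $r\in\{1,2\}$, $6k+5$ when $r\in\{3,4\}$, and $6k+6$ when $r=5$. (iv) Verify in each of the six cases that the chosen decomposition into tassel-techniques is realizable (the blocks can be taken consecutive inside the tassel, and each technique leaves its neighborhood lune-free without disturbing the others) and that the overheads add up. For instance, $n=6k$ decomposes as $(k-1)$ five-lune clearances plus one more five-lune clearance covering the final five lunes, giving $6k$; $n=6k+5$ has $6k+4$ lunes, handled as $k$ five-lune clearances ($5k$ lunes) plus one four-lune clearance, total overhead $6k + 6$; and the intermediate residues are filled with one extra $3$-tassel ($+5$) or $2$-tassel ($+5$, wait—$+5$) clearance as needed, matching $6k+4$ or $6k+5$. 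A short induction on $k$ can replace the explicit casework: the base cases $n=6,7,\dots,11$ are checked directly against Lemma \ref{lemma:aux} and the two Figures that handle $4$ and $5$ lunes, and the inductive step peels off one maximal ($5$-lune, $+6$) block, reducing $n$ by $6$ and the overhead by $6$, which is exactly the periodicity of the claimed formula.

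The main obstacle I anticipate is purely combinatorial-geometric bookkeeping rather than anything deep: one must make sure that when several of these local modifications are performed inside a single long tassel, the blocks genuinely partition the $n-1$ lunes, that after resolving one block the adjacent blocks are still honest (sub-)tassels to which the remaining techniques apply, and that no technique re-introduces a lune at a block boundary (the ``no more lunes are created'' clauses in the preceding lemmas and figures are what we invoke here). A secondary subtlety is the choice of which small technique to append for the residues $r=1,2$ versus $r=3,4$: one has to check that clearing, say, $2$ leftover lunes via Figure \ref{fig:cormainlv4} costs $5$ and not $6$, so that $6(k-1)$ plus the cost of clearing the remaining lunes genuinely lands on $6k+4$ rather than $6k+5$; getting these endpoint adjustments exactly right — and confirming they are consistent with the ``$n-1$ lunes'' count — is the part that requires care, but it is a finite check over the six residue classes.
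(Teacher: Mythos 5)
Your overall strategy --- decompose the maximal tassel into sub-tassels that the Auxiliary Lemma handles and add up the costs --- is exactly the paper's, but two concrete things go wrong in the execution. First, your bookkeeping is by lunes rather than by crossings, and it does not close: a maximal $6k$-tassel has $6k-1$ lunes, while $k$ applications of the five-lune ($6$-tassel) technique account for only $5k$ of them, so for $k\geq 2$ your ``blocks'' cannot partition the $n-1$ lunes as step (iv) asserts. The consistent bookkeeping is the paper's: write $n$ as a sum of block \emph{lengths} (numbers of crossings), each block a sub-tassel of size $3$, $4$, $5$ or $6$; the junction lunes between consecutive blocks are absorbed by the transformations themselves (this is what the ``no new lunes are created'' clauses buy you), and the cost is the sum of $+5$, $+5$, $+6$, $+6$ over the blocks.

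Second --- and this is the real gap --- you never actually produce the decompositions for the residues $r=1,2$, which are precisely the cases where the bound $6k+4$ (rather than $6k+5$) is claimed. Appending ``one extra $3$-tassel ($+5$)'' to $k$ six-blocks gives $6k+5$, and to $k-1$ six-blocks gives $6k-1$; neither is $6k+4$. The paper's point is that for $n=6k+1$ you must sacrifice one six-block and split the residual seven crossings as $\sigma_1^3\cdot\sigma_1^4$, costing $5+5$, for a total of $6(k-1)+10=6k+4$; similarly $6k+2=6(k-1)+4+4$. Your step (iii), ``choosing the final block(s) so that the accumulated overhead matches the stated value,'' assumes the conclusion instead of exhibiting these splittings. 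Finally, the ``$2$-tassel ($+5$)'' clearance you reach for is not in the Auxiliary Lemma's toolkit: the color-preserving elimination of an isolated lune (Main Lemma) costs $+8$, and the $+5$ elimination of Figure \ref{fig:cormainl} can add a color, which is why the paper's decompositions use only blocks of size $3$ through $6$.
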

\begin{proof}
$n=6k+k'$ for positive $k\geq 1$ and $0 \leq k' < 6$.

If $n=6k$  then each sub-tassel $\sigma_1^6$ is treated as in Figure \ref{fig:cormainlv6}.

If $n=6k+1$ treat the $\sigma_1^{6(k-1)}$ part of the $n$-tassel as in the preceding case (obtaining from it $6(k-1)$ extra crossings), and the $\sigma_1^{6+1}=\sigma_1^3\cdot \sigma_1^4$ part of it as in Figure \ref{fig:cormainl} (for the $\sigma_1^3$ part of it and obtaining another $5$ extra crossings) and as in Figure \ref{fig:cormainlv4} (for the $\sigma_1^4$ part of it and obtaining another $5$ extra crossings). Analogously for $n=6k+2=6(k-1)+4+4$.

If $n=6k+3$ (respect., $n=6k+4$) then the $\sigma_1^{6k}$ is treated as before giving rise to $6k$ extra crossings. The $\sigma_1^3$ (respect., $\sigma_1^4$) part of it is treated as in Figure \ref{fig:cormainl} (respect., as in Figure \ref{fig:cormainlv4}) giving rise to another $5$ extra crossings.

If $n=6k+5$ the we write $\sigma_1^{6k+5}=\sigma_1^{6k}\cdot \sigma_1^5$ and reason analogously. This concludes the proof.
\end{proof}

\subsection{Review of the Teneva Game (\cite{klgame}).}\label{subsect:tenevagame}
\noindent
%\bigbreak
Proposition \ref{prop:Tenevaremove} below presents a different delunification of a maximal tassel. It is based on a procedure for breaking down braid-closed tassels i.e., torus links of type $(2, n)$, in order to estimate the minimum number of colors they admit modulo their determinant (which is $n$, the number of crossings of the tassel at stake). This procedure is called the Teneva Game (\cite{klgame}) and the effect of one iteration of it is called a Teneva transformation. Roughly speaking, a Teneva transformation is a finite sequence of colored Reidemeister moves which splits $\sigma_1^{2k+1}$ (respect., $\sigma_1^{2k}$) into two $\sigma_1^{k}$'s (respect., into $\sigma_1^{k}$ and $\sigma_1^{k-1}$) and reduces the number of colors to roughly half the original number of colors. A Teneva transformation is illustrated in Figure \ref{fig:tenevagame} for odd $n=5$ (left-hand side) and for even $n=6$ (right-hand side).

\begin{figure}[!ht]
   \psfrag{s2k+1}{\huge$\sigma_{1}^{2k+1}$}
   \psfrag{s2k}{\huge$\sigma_{1}^{2k}$}
   \psfrag{sk-1}{\huge$\sigma_{1}^{k-1}$}
   \psfrag{sk}{\huge$\sigma_{1}^{k}$}
   \psfrag{3b-2a}{\huge$3b-2a$}
   \psfrag{3b-2a1}{\Large$3b-2a$}
   \psfrag{4b-3a}{\large$4b-3a$}
    \centerline{\scalebox{.47}{\includegraphics{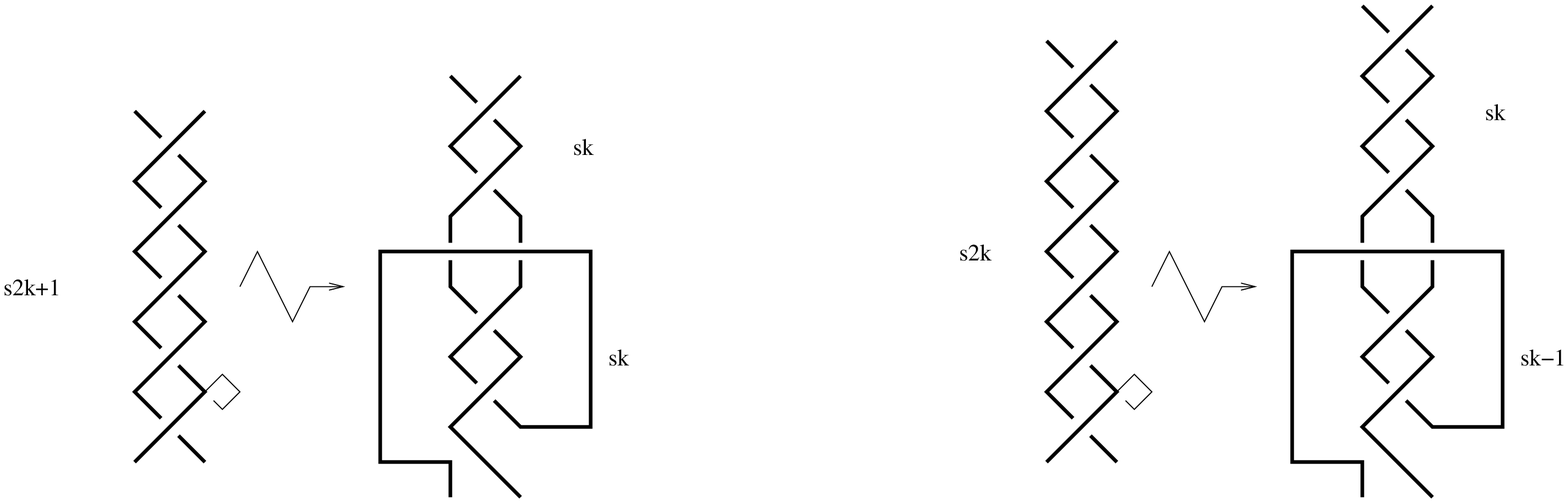}}}
    \caption{$\sigma_{1}^{n}$ and how it looks like after a Teneva transformation. The pair on the left illustrates the odd $n$ instance. Along with each Teneva transformation, the effect on the size of the tassels is displayed: $2k+1 \rightarrow k + k$ whereas $2k \rightarrow k+ (k-1)$.}\label{fig:tenevagame}
\end{figure}

If,  after the Teneva transformation has been performed, the remaining $\sigma_{1}^{k}$'s  exhibit $k > 4$, a new Teneva transformation can be performed on each $\sigma_{1}^{k}$. This is the Teneva Game. The game ends when the $\sigma_{1}^k$'s resulting from a Teneva transformation exhibit $k\in \{ 2, 3, 4  \}$ (\cite{klgame}).

We remark that although the Teneva Game was conceived for a task which a priori had nothing to do with delunification, it helps in the delunification process of a tassel. At the end of the Teneva Game we only have maximal tassels of the sort $\sigma_1^{\pm 2}$, $\sigma_1^{\pm 3}$, or $\sigma_1^{\pm 4}$, which can then be dealt with with the methods described in the Main Lemma \ref{lemma: main} or in Lemma \ref{lemma:aux}. It thus seemed relevant to ascertain which of these two methods brings about more crossings. The two methods we refer to are $(i)$ the systematic use of the methods described in Corollary \ref{cor:count},  and $(ii)$ the use of the Teneva Game first and eventually the use of the methods in the Main Lemma \ref{lemma: main} or in Lemma \ref{lemma:aux} to deal with the remaining   $\sigma_1^{\pm 2}$'s , $\sigma_1^{\pm 3}$'s , or $\sigma_1^{\pm 4}$'s.

\bigbreak

In the set-up of the Teneva Game, the following definitions are relevant.

\begin{def.}\label{def:lowerhalf}$\cite{klgame}$$[\emph{Lower Half of a positive integer; Sequence of lower halves of a positive integer}]$

For any positive odd integer $2k+1$ we set
\[
lh(2k+1):=k
\]
and call it the \emph{Lower Half} of $2k+1$. The lower half of a positive even integer coincides with the ordinary half.
\bigbreak
Given a positive integer $n\geq 5$, we define its \emph{Sequence of Lower Halves}, notation $LH(n)$, to be the sequence of iterates of the map $lh(\dots )$ on $n$. Its last term is the first iterate to lie in the set $\{  2, 3, 4 \}$. For instance, $LH(5)=(2)$, $LH(6)=(3)$, $LH(7)=(3)$, $LH(8)=4$, $LH(9)=4$, $LH(10)=(5, 2)$, $LH(11)=(5, 2)$, etc (more such calculations at the end of Section \ref{sect:grey}). 
\bigbreak

Furthermore, given a positive integer $n$, we define $l_n$ to be the number of entries in $LH(n)$, and call it \emph{the length of the sequence of lower halves of $n$}; and we define $t_n$ to be the last entry of $LH(n)$ $($necessarily $t_n \in \{ 2, 3, 4 \})$, and call it \emph{the tail of the sequence of lower halves of $n$}.
\end{def.}

We now state the main result of the Teneva Game which will be useful below in Section \ref{sect:grey}.
\begin{thm}\label{thm:thenevagame}$\cite{klgame}$ For any prime $p > 7$,
\[
mincol_p\, T(2, p) \leq t_p + 2l_p -1
\]
with $l_p$, and $t_p$ as defined in Definition \ref{def:lowerhalf}.
\end{thm}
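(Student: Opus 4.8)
The plan is to prove the bound $mincol_p\, T(2,p) \leq t_p + 2l_p - 1$ by carefully iterating the Teneva transformation and tracking how the number of colors changes at each step. First I would set up the bookkeeping: start with the standard braid-closure diagram of $T(2,p)$ colored by the two colors $a$ and $b$ at the top, so that the arcs running down the tassel acquire colors $a$, $b$, $2b-a$, $3b-2a$, \dots, i.e., $jb-(j-1)a$ for $j=0,1,\dots$. A single Teneva transformation applied to $\sigma_1^n$ produces, by Figure \ref{fig:tenevagame}, two sub-tassels of sizes $lh(n)$ and $n - lh(n)$ (so $k+k$ when $n=2k+1$, and $k+(k-1)$ when $n=2k$), and it reduces the palette used on that portion from roughly $n$ colors to roughly $lh(n)+2$ colors, because after the transformation only the colors appearing on the two shorter tassels survive, and those two tassels can be arranged to share all but a couple of their colors.

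Next I would make the color count precise. The key claim is that after performing the Teneva Game down to a terminal tassel $\sigma_1^{t_p}$ with $t_p\in\{2,3,4\}$, the total number of distinct colors appearing in the whole diagram is at most $2l_p + (\text{colors needed for }\sigma_1^{t_p})$, and one checks directly that a $\sigma_1^{t_p}$ tassel (together with the ambient two colors it must match) contributes $t_p - 1$ genuinely new colors. The heuristic is that each of the $l_p$ iterations of the game introduces at most $2$ new colors (the ``top'' two colors of the newly created shorter tassel that are not already forced to coincide with colors present elsewhere), and the final small tassel is then colored with the methods already available; adding these up gives $2l_p + (t_p - 1) = t_p + 2l_p - 1$. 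I would present this as an induction on $l_p$: the base case $l_p = 1$ corresponds to $n = p$ already small enough that $p\in\{5,6,7\}$ would be the relevant check, but since $p>7$ we always have $l_p \geq 1$ with at least one genuine transformation, and the inductive step is exactly one Teneva transformation reducing $l_p$ by one while adding at most two colors.

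The main obstacle I anticipate is the color-counting at the splitting step — specifically, verifying that the two sub-tassels produced by a Teneva transformation really can be made to share their color palettes except for two new colors, rather than each contributing its own full set. This requires looking carefully at the colored picture in Figure \ref{fig:tenevagame} and at the linear recursion $c_{j+1} = 2c_j - c_{j-1}$ governing the colors along a tassel: one must check that the colors forced on the ends of the new sub-tassels, after the Reidemeister moves are performed, coincide appropriately (this is where the specific arithmetic of $jb-(j-1)a$ and the choice to take \emph{lower} halves rather than arbitrary splittings matters). A secondary subtlety is handling the parity cases $n=2k$ versus $n=2k+1$ uniformly, since the even case produces unequal sub-tassels $\sigma_1^k$ and $\sigma_1^{k-1}$; but since $lh$ is defined to absorb exactly this, and the longer piece $\sigma_1^{k}$ is the one fed into the next iteration, the sequence $LH(p)$ already encodes the correct recursion and the induction goes through.

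Finally I would remark that the bound is not claimed to be sharp — it is an upper bound obtained from this particular strategy — and that the terminal contribution $t_p-1$ together with the factor $2$ on $l_p$ is precisely what one reads off from Lemma \ref{lemma:aux} and the Main Lemma for the small tassels, so the statement is internally consistent with the crossing-count corollaries proved above. I expect the write-up to be short once the single-transformation color bound is nailed down; essentially all the content is in that one local computation, and the rest is summing a telescoping count.
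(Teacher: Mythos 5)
First, a point of order: this paper does not actually prove Theorem \ref{thm:thenevagame} --- it is quoted from \cite{klgame} without proof --- so I am judging your sketch against what such a proof must contain rather than against an in-paper argument. Your bookkeeping is the right one (a terminal palette of $t_p+1$ colors shared by all terminal tassels, plus $2$ new colors per round of the game, gives $t_p+2l_p-1$), and iterating the Teneva transformation while counting surviving colors is indeed the strategy of the cited proof.

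There is, however, a genuine gap, and it sits exactly where you flag ``the main obstacle'': the claim that each round of the game adds only two colors \emph{globally}. After $j$ rounds the diagram contains $2^{j}$ tassels, so round $j+1$ consists of $2^{j}$ simultaneous Teneva transformations; your induction (``one Teneva transformation reducing $l_p$ by one while adding at most two colors'') treats a round as a single transformation and never shows that all the sibling tassels produced at a given level can be colored from one common palette. That palette-sharing statement is the entire content of the theorem and requires the explicit arithmetic of the colors $jb-(j-1)a$ under the transformation; it cannot be waved through, as the case $p=5$ shows: there $t_5+2l_5-1=2+2-1=3$, yet $mincol_5\, T(2,5)=4$ (any three-element set mod $5$ is grey by Lemma \ref{lem:2setsaregrey}), so the assertion that the two terminal $\sigma_1^{2}$'s produced from $\sigma_1^{5}$ share a common $3$-color palette is simply false in that case. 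Your argument never invokes the hypothesis $p>7$, so as written it would ``prove'' this false instance; a correct write-up must locate where that hypothesis enters the color computation. (Minor: for prime $p>7$ one always has $l_p\ge 2$, so your proposed base case $l_p=1$ with $p\in\{5,6,7\}$ is not the right anchor for the induction.)
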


\bigbreak

Another result from \cite{klgame} which will be useful below when comparing the two delunification processes is the following.
\begin{prop}\label{prop:2adicexp}$\cite{klgame}$
Given a positive integer $n$, consider its $2$-adic expansion:
\[
n= 2^{e_1} + 2^{e_2} + \cdots + 2^{e_{N_n}} + 1 \qquad \qquad \text{ with } e_1 > e_2 > \cdots > e_{N_n} \geq 0
\]

Then
\begin{enumerate}
\item If $e_1 - e_2 = 1$ then
\[
l_n = e_1-1 \qquad \qquad \text{ and } \qquad \qquad t_n=3
\]

\item If $e_1 - e_2 = 2$ then
\[
l_n = e_1-1 \qquad \qquad  \text{ and }  \qquad \qquad  t_n=2
\]

\item If $e_1 - e_2 > 2$ then
\[
l_n = e_1-2 \qquad \qquad  \text{ and }  \qquad \qquad t_n=4
\]
\end{enumerate}
with $l_n$ and $t_n$ as defined in Definition \ref{def:lowerhalf}.
\end{prop}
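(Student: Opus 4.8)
The statement to prove is Proposition~\ref{prop:2adicexp}, relating the $2$-adic expansion of $n$ to the length $l_n$ and tail $t_n$ of its sequence of lower halves.

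\medskip

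\textbf{Plan of proof.} The key observation is how the map $lh$ acts on a $2$-adic expansion. Write $n = 2^{e_1} + 2^{e_2} + \cdots + 2^{e_{N_n}} + 1$ with $e_1 > e_2 > \cdots > e_{N_n} \geq 0$. I would first dispose of the parity bookkeeping: $n$ is odd precisely when the term $2^0$ appears, i.e. when $e_{N_n} = 0$, in which case $n - 1 = 2^{e_1} + \cdots + 2^{e_{N_n-1}}$ is even and $lh(n) = (n-1)/2 = 2^{e_1 - 1} + \cdots + 2^{e_{N_n-1}-1}$; when $n$ is even, $e_{N_n} \geq 1$ and $lh(n) = n/2 = 2^{e_1-1} + \cdots + 2^{e_{N_n}-1}$. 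In either case the effect on the \emph{top gap} $e_1 - e_2$ is the crucial invariant: applying $lh$ once decreases $e_1$ by $1$ (subtracting $1$ from every exponent in the "+1-truncated" part) and leaves the gap $e_1 - e_2$ unchanged, \emph{except} in the degenerate situations where the expansion collapses to one or two terms. So the strategy is to iterate $lh$, track the leading exponent as it counts down, and determine exactly at which stage the iterate lands in $\{2,3,4\}$, distinguishing the three gap cases.

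\medskip

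\textbf{The three cases.} I would set up an induction on $e_1$. For the base cases one checks directly that the smallest $n$ with a given top gap behave as claimed: e.g. $n=5 = 2^2 + 1$ (gap $e_1 - e_2$ is vacuous, or one treats $n = 2^{e_1}+1$ separately), $n = 7 = 2^2 + 2^1 + 1$ has gap $1$, $l_7 = 1 = e_1 - 1$, $t_7 = 3$; $n = 9 = 2^3 + 1$; $n = 13 = 2^3 + 2^2 + 1$; $n = 11 = 2^3 + 2^1 + 1$ has gap $2$, and $LH(11) = (5,2)$ so $l_{11} = 2 = e_1 - 1$, $t_{11} = 2$; $n = 19 = 2^4 + 2^1 + 1$ has gap $3$, etc. For the inductive step: given $n$ with top gap $g = e_1 - e_2$ and $e_1 \geq $ some threshold, $lh(n)$ is an integer whose $2$-adic expansion has leading exponent $e_1 - 1$ and the \emph{same} second exponent minus one, hence the same top gap $g$ (one must verify that subtracting $1$ from the sub-leading part, after stripping off the unit, does not change which two top exponents are largest --- it does not, since all exponents shift together). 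So $l_n = l_{lh(n)} + 1$ and $t_n = t_{lh(n)}$, and induction closes each of the three cases with $l_n = e_1 - 1$ (gaps $1,2$) or $l_n = e_1 - 2$ (gap $> 2$), and $t_n = 3, 2, 4$ respectively. The gap $>2$ case is slightly different because the iteration terminates one step earlier: once the leading exponent has been driven down to $2$, the number has the form $2^2 + 1 = 5$ or $2^2 + 2^0 + \text{(something)}$... more precisely, when $e_1$ reaches the value where $e_1 - e_2$ would force the number into $\{8,9\}$ (tail $4$) one stops; I would track this endpoint carefully.

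\medskip

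\textbf{Main obstacle.} The delicate point --- and the one I expect to require the most care --- is the bookkeeping at the \emph{end} of the iteration, i.e. pinning down exactly which iterate first lands in $\{2,3,4\}$ and why this differs by one step between the gap-$\leq 2$ cases and the gap-$>2$ case. Concretely: a number of the shape $2^{e_1} + 2^{e_1 - 1} + (\text{lower})$ eventually reduces, as $e_1$ counts down, to $2^2 + 2^1 + (\cdots) $ or $2^2 + 2^1 + 1 = 7$, giving tail $3$; a number $2^{e_1} + 2^{e_1-2} + (\cdots)$ reduces to $2^2 + 2^0 + 1 = \ldots$ --- one must be careful that "+1" in the normal-form expansion and the ordinary unit bit do not collide, since $2^{e_{N_n}} = 2^0$ may coincide with the trailing $+1$. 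The cleanest way to handle this is to rewrite everything in terms of the ordinary binary representation of $n-1$ (dropping the artificial "$+1$") and observe that $lh$ applied to an even number is just a right-shift, while $lh$ applied to an odd number is a right-shift of $n-1$; then $l_n$ is essentially the number of right-shifts needed to bring the leading bit down to position $1$ or $2$, and the tail is read off from the bottom two or three bits of the shifted value. Once the problem is phrased as a statement about binary shifts, the three cases correspond to whether the top two one-bits of $n-1$ are adjacent, separated by one zero, or separated by more, and the induction becomes essentially immediate. I would then simply verify consistency with the worked examples $LH(5),\dots,LH(11)$ listed in Definition~\ref{def:lowerhalf}.
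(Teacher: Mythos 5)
First, a point of reference: the paper does not actually prove this proposition --- it is imported verbatim from \cite{klgame} and used as a black box --- so there is no in-paper argument to compare yours against, and your proposal has to stand on its own.

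Your central observation is the right one: $lh(n)=\lfloor n/2\rfloor$ for every $n$, hence $lh^{j}(n)=\lfloor n/2^{j}\rfloor$, so $l_n$ is the least $j\ge 1$ with $n<5\cdot 2^{j}$ and $t_n=\lfloor n/2^{l_n}\rfloor$. Pushed through, this yields a short direct proof with no induction at all: for odd $n$ the three hypotheses pin $n$ into the intervals $3\cdot 2^{e_1-1}<n<4\cdot 2^{e_1-1}$, $5\cdot 2^{e_1-2}<n<3\cdot 2^{e_1-1}$, and $4\cdot 2^{e_1-2}<n<5\cdot 2^{e_1-2}$ respectively, and each interval simultaneously certifies which $j$ is minimal and what the tail is. Your write-up, however, stops short of this: the paragraph labelled ``main obstacle'' defers precisely the step that constitutes the entire content of the proposition (``I would track this endpoint carefully''), and the induction you set up in its place does not close as stated. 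The top gap is \emph{not} invariant under $lh$ all the way down: $11=2^{3}+2^{1}+1$ has gap $2$, but $lh(11)=5=2^{2}+1$ has no second exponent at all, and the one-term numbers $2^{e}+1$ have tail $2$ when $e=2$ but tail $4$ when $e\ge 3$; so the recursion ``$l_n=l_{lh(n)}+1$, $t_n=t_{lh(n)}$, same gap'' needs a separate analysis of the collapsing and terminal cases --- which is exactly the bookkeeping you postponed.

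Second, the collision you flag between the trailing ``$+1$'' and a genuine $2^{0}$ bit is not a removable technicality: the proposition as literally stated is \emph{false} for various even $n$, and a careful execution of your plan must discover this rather than smooth it over. The paper's own example $LH(8)=(4)$ already exhibits it: $8=2^{2}+2^{1}+2^{0}+1$ has $e_1-e_2=1$, yet $t_8=4$, not $3$; likewise $12=2^{3}+2^{1}+2^{0}+1$ has gap $2$ but $LH(12)=(6,3)$, so $t_{12}=3$, not $2$. The statement is correct exactly in the regime in which \cite{klgame} uses it, namely odd $n$ (equivalently $e_{N_n}\ge 1$), where $\lfloor n/2^{j}\rfloor=\lfloor (n-1)/2^{j}\rfloor$ for all $j\ge 1$ because no $2^{j}$ divides $n$, so the shift picture for $n-1$ applies verbatim and the interval inequalities above are strict. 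For even $n$ those inequalities can become equalities and the conclusion shifts. Your proposed reduction ``to binary shifts of $n-1$'' silently assumes the odd case, so as written it would also ``prove'' the false even instances. To repair the argument: add the hypothesis that $n$ is odd and replace the induction by the direct interval computation.
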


\bigbreak
\subsection{An alternative delunification process based on the Teneva Game.}\label{subsect:deluneficationtenevagame}
\noindent

We now  give  an alternative delunification of a maximal tassel  along with an estimate of the extra crossings it brings about.
\begin{prop}\label{prop:Tenevaremove}$[\emph{Counting extra crossings - 2nd approach}]$
Let $m$ be a positive integer, let $L$ be a link admitting non-trivial $m$-colorings. Let $D_0$ stand for an $m$-minimal coloring of $L$. If $D_0$ is not a lune-free diagram, we do the following on each maximal tassel in $D_0$. We  apply the Teneva Game $($\cite{klgame}$)$ to the maximal tassel at issue. At the end of the Teneva Game we obtain $\sigma_1^ i$'s $($where  $i=\pm 2$, or  $\pm 3$, or $\pm 4 )$. We then treat each of these $\sigma_1^i$'s as in the Main Lemma \ref{lemma: main} or as in Lemma \ref{lemma:aux}. The upper bounds on the increase in the number of the crossings at the end of the process are:
\begin{equation*}
\begin{cases}
6\cdot 2^l -1 \qquad & \text{ if there are no $\sigma_1^{\pm 2}$ at the end of the game} \\
9\cdot 2^l -1  \qquad & \text{ otherwise}
\end{cases}
\end{equation*}
where $l$ is the length of the Lower Half Sequence of the number of crossings of the maximal tassel at issue $($\ref{def:lowerhalf}$)$. \end{prop}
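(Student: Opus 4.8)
The plan is to track the Teneva Game as a binary tree and count crossings level by level. Recall that one Teneva transformation splits a tassel $\sigma_1^n$ into $\sigma_1^{lh(n)}$ and $\sigma_1^{n-lh(n)}$, and that $n - lh(n) = lh(n)$ when $n$ is even, while $n - lh(n) = lh(n)+1$ when $n$ is odd (i.e.\ $2k+1 \to k + k$, $2k \to k + (k-1)$). Consequently, after $j$ iterations of the game applied to a maximal tassel whose crossing count has Lower Half Sequence of length $l$, the surviving tassels all have exponents that differ from the $j$-th iterate $lh^{(j)}(n)$ by at most one; and after $l$ iterations every surviving tassel is a $\sigma_1^i$ with $i \in \{\pm 2, \pm 3, \pm 4\}$. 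The number of such terminal tassels is exactly $2^l$ (each transformation doubles the count, and we stop all branches after the same number $l$ of iterations since $lh$ applied to any of $\{2,3,4\}$ already leaves the set). This is the combinatorial backbone of the estimate: the factor $2^l$ is literally the number of leaves of the transformation tree.

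Next I would bound the extra crossings. The extra crossings come from two sources: the Teneva transformations themselves, and the final treatment of the terminal $\sigma_1^i$'s via the Main Lemma (Lemma~\ref{lemma: main}) or the Auxiliary Lemma (Lemma~\ref{lemma:aux}). For the terminal step, Lemma~\ref{lemma:aux} and Corollary~\ref{cor:count} tell us a $\sigma_1^{\pm 3}$ or $\sigma_1^{\pm 4}$ costs $5$ extra crossings, while a $\sigma_1^{\pm 2}$ (an isolated lune) must be handled by the Main Lemma at a cost of $8$. So if no terminal tassel is a $\sigma_1^{\pm 2}$, the final step costs at most $5$ per leaf, i.e.\ at most $5\cdot 2^l$; if some terminal tassel is a $\sigma_1^{\pm 2}$, the crude bound is $8$ per leaf, i.e.\ at most $8\cdot 2^l$. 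For the transformation step, each Teneva transformation itself adds a bounded number of crossings (the local cost of the sequence of colored Reidemeister moves realizing a single split, cf.\ Figure~\ref{fig:tenevagame}); summing over all internal nodes of the tree, of which there are $2^l - 1$, gives a contribution of the form $c\cdot(2^l - 1)$ for the appropriate constant $c$ read off from Figure~\ref{fig:tenevagame}. Adding the two contributions and folding the constants gives $6\cdot 2^l - 1$ in the lune-free-at-the-leaves case (no $\sigma_1^{\pm2}$) and $9\cdot 2^l - 1$ otherwise; I would present the arithmetic $5\cdot 2^l + (c\cdot 2^l - c') = 6\cdot 2^l - 1$ (resp.\ $8\cdot 2^l + \cdots = 9\cdot 2^l -1$) explicitly once the per-transformation constant from Figure~\ref{fig:tenevagame} is fixed, since that is the only place the exact value of $c$ enters.

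Finally I would check that this process genuinely produces a lune-free diagram and does not increase the number of colors. The Teneva Game is color-nonincreasing by construction (\cite{klgame}; indeed it roughly halves the colors at each step), and each of the Main Lemma and Auxiliary Lemma sequences preserves the number of colors; since the maximal tassels are disjoint portions of $D_0$, performing these local procedures on each of them independently neither interferes with the others nor introduces new lunes, so the resulting diagram of $L$ is lune-free and is colored with at most $mincol_m L$ colors. One should also note that the game terminates exactly when all branches reach exponent in $\{2,3,4\}$, which is precisely after $l$ steps by the definition of the Lower Half Sequence, so the count $2^l$ of leaves is correct.

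The main obstacle I anticipate is pinning down the per-transformation crossing cost so that the bookkeeping closes to the clean constants $6$ and $9$: one must be careful that a single Teneva transformation, when realized by explicit colored Reidemeister moves, contributes the right amount, and that passing from ``$5$ per leaf plus transformation overhead'' to ``$6\cdot 2^l - 1$'' is not an over- or under-count at the root and at the leaves (the $-1$ and the jump from $6$ to $9$ are exactly the delicate endpoints). A secondary subtlety is the worst-case analysis for which leaves end up being $\sigma_1^{\pm2}$ versus $\sigma_1^{\pm 3}$ or $\sigma_1^{\pm 4}$: Proposition~\ref{prop:2adicexp} controls the tail $t_n$, hence whether $\sigma_1^{\pm 2}$'s appear at all, so the dichotomy in the statement should be matched against the three cases there, but for the stated upper bound it suffices to use the crude ``$8$ per leaf if any $\sigma_1^{\pm2}$'' bound rather than the exact count of how many $\sigma_1^{\pm2}$'s occur.
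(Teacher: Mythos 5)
Your proposal follows essentially the same route as the paper: count $2^l$ terminal tassels, charge $5$ (resp.\ $8$) extra crossings per leaf via Lemma \ref{lemma:aux} (resp.\ the Main Lemma) according to whether a $\sigma_1^{\pm 2}$ occurs, and add the overhead of the Teneva transformations themselves, of which there are $2^l-1$ (equivalently, $2^{k-1}$ at the $k$-th round, summing to $\sum_{k=0}^{l-1}2^k=2^l-1$).

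The one place your argument does not close is exactly the place you flagged: the per-transformation cost $c$. You leave it as an undetermined constant ``read off from Figure \ref{fig:tenevagame}'', but the stated bounds force $c=1$, and that value needs justification rather than reverse-engineering from the answer. The paper supplies it by observing that a single Teneva transformation is realized by one type I Reidemeister move followed by a number of type III Reidemeister moves; only the type I move changes the crossing count, and it adds exactly one crossing. With $c=1$ your bookkeeping becomes $5\cdot 2^l+(2^l-1)=6\cdot 2^l-1$ and $8\cdot 2^l+(2^l-1)=9\cdot 2^l-1$, which is precisely the paper's computation. (Your worry about over- or under-counting at the root and leaves is unfounded once $c=1$ is fixed: the $-1$ is just the count $2^l-1$ of transformations, and the jump from $6$ to $9$ is just the difference $8-5$ per leaf.) The remaining points you raise --- that the game terminates after $l$ rounds with all exponents in $\{2,3,4\}$, and that one may use the crude ``$8$ per leaf'' bound without counting how many leaves are actually $\sigma_1^{\pm 2}$ --- are consistent with what the paper does, which likewise takes the leaf count $2^l$ and the termination of the game as given from \cite{klgame}.
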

\begin{proof}A Teneva transformation consists of one type I Reidemeister move, producing one crossing, followed by a number of type III Reidemeister moves, see Figure \ref{fig:tenevagame}. Thus, each Teneva transformation introduces, per se, one extra crossing, associated with the performance of the type I Reidemeister move.

We assume we are dealing with a maximal $n$-tassel for which we set $l=l_n$.

When this tassel undergoes the Teneva Game it picks up $2^{k-1}$ crossings at the $k-th$  sequence of Teneva transformations (one crossing per type I Reidemeister move performed at this $k-th$ step of the Teneva Game). The overall increase in the number of crossings is then
\[
\sum_{k=0}^{l-1}2^k = 2^l-1
\]
Then we have to address the maximal tassels left over in the last step of the Teneva game. These are of the sort $\sigma_1^{\pm 2}$, $\sigma_1^{\pm 3}$, and/or $\sigma_1^{\pm 4}$. There are $2^l$ maximal tassels in this last step, so if there are no $\sigma_1^{\pm 2}$'s at this last step, the increase in the number of crossings is $5\cdot 2^l$, leaning on Lemma \ref{lemma:aux}. Otherwise the upper bound in the increase of crossings at this last step is $8\cdot 2^l$. Adding to these the number of crossings introduced before the end of the game, $2^l-1$, we obtain the results in the statement. The proof is concluded.
\end{proof}

\subsection{Comparing the two delunification processes.}\label{subsect:compare}

\noindent

We now compare the approaches described by Corollary \ref{cor:count} and Proposition \ref{prop:Tenevaremove} to see which one of them increases the least the number of crossings. Since each one of them resorts to a different parameter to express the results we have to express these parameters in terms of a common one. Specifically, if we are dealing with a maximal $n$-tassel with $n=6k+k'$, then Corollary \ref{cor:count} expresses results in terms of $k$ whereas Proposition \ref{prop:Tenevaremove} does this in terms of $l=l_{6k+k'}$. We will then write down the dyadic expansion of $k$ and from it extract information on the dyadic expansion of $6k+k'$ and on $l_{6k+k'}$. This will allow us to rewrite the results of Corollary \ref{cor:count} and Proposition \ref{prop:Tenevaremove} in terms of $k$.

\begin{prop}\label{prop:compare}
The approach described in Corollary \ref{cor:count} gives rise to less crossings than the approach described in Proposition \ref{prop:Tenevaremove}. Thus the approach using the Teneva game gives rise to more crossings than the other one.
\end{prop}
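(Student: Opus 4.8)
The plan is to reduce both counts to functions of a single parameter—the dyadic expansion of $k$ where $n = 6k + k'$—and then compare the resulting expressions term by term. First I would fix a maximal $n$-tassel and split according to $k' \in \{0,1,2,3,4,5\}$ (with the understanding $n \geq 6$, so $k \geq 1$). Corollary \ref{cor:count} already gives the cost of the first approach directly in terms of $k$: it is $6k$, $6k+4$, $6k+5$, or $6k+6$ depending on $k'$, hence in all cases it is at most $6k + 6$ and at least $6k$. The second approach, from Proposition \ref{prop:Tenevaremove}, is expressed in terms of $l = l_{6k+k'}$, so the crux is to bound $l_{6k+k'}$ from below in terms of $k$, and then observe that $6 \cdot 2^{l} - 1$ (or $9 \cdot 2^l - 1$) already dwarfs $6k + 6$ once $l$ is, say, at least $\log_2 k + O(1)$.

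\smallskip

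The key step is the estimate relating $l_{6k+k'}$ to the dyadic length of $k$. I would write $k = 2^{e_1} + 2^{e_2} + \cdots + 2^{e_{N}} + \varepsilon$ in the normalized form of Proposition \ref{prop:2adicexp} (absorbing $k'$ and the factor $6 = 2 + 4$ into a shifted dyadic expansion of $6k + k'$: note $6k$ shifts the top exponent of $k$ up by at least $2$ and at most $3$, and adding $k' < 6$ only perturbs the low-order bits). The point is that the leading exponent $e_1'$ of $6k + k'$ satisfies $e_1' \geq e_1 + 2$ where $e_1 = \lfloor \log_2 k \rfloor$, so by Proposition \ref{prop:2adicexp} we get $l_{6k+k'} \geq e_1' - 2 \geq e_1 = \lfloor \log_2 k \rfloor$. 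Consequently $2^{l} \geq 2^{\lfloor \log_2 k\rfloor} > k/2$, so the second approach costs more than $6 \cdot (k/2) - 1 = 3k - 1$ in the best case and more than $9 \cdot (k/2) - 1$ in the worst; comparing with the first approach's cost of at most $6k + 6$, the inequality $3k - 1 > 6k + 6$ fails for small $k$, so a cleaner bound is needed—one should push the estimate to $2^l \geq k$ rather than $k/2$, which holds because $e_1' \geq e_1 + 2$ gives room to spare (in fact $l_{6k+k'} \geq e_1 + 1$ generically, whence $2^l \geq 2k$ and $6 \cdot 2^l - 1 \geq 12k - 1 > 6k + 6$ for $k \geq 2$, with the finitely many small cases $k = 1$ checked by hand).

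\smallskip

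So the order of steps is: (1) record the first approach's cost as a function of $k$ straight from Corollary \ref{cor:count}, bounding it above by $6k+6$; (2) derive from the dyadic expansion of $k$ the inequality $l_{6k+k'} \geq e_1 + 1$ where $e_1 = \lfloor \log_2 k\rfloor$, using Proposition \ref{prop:2adicexp} applied to $6k + k'$ and carefully tracking how multiplication by $6$ and addition of $k' < 6$ affect the top two exponents and their gap; (3) conclude $6 \cdot 2^{l_{6k+k'}} - 1 \geq 12k - 1$, which exceeds $6k + 6$ for all $k \geq 2$, and dispatch $k = 1$ (equivalently $n \in \{6, \ldots, 11\}$) by direct inspection against the values listed after Definition \ref{def:lowerhalf}; (4) since the worst-case second bound $9 \cdot 2^l - 1$ only makes the gap larger, the comparison holds in every case, giving the claimed inequality and hence the final sentence about the Teneva game being the costlier route.

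\smallskip

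The main obstacle I anticipate is step (2): controlling the interaction between the factor $6$, the remainder $k'$, and the gap $e_1' - e_2'$ between the top two exponents of $6k + k'$, since Proposition \ref{prop:2adicexp} branches on whether that gap is $1$, $2$, or greater. Multiplication by $6 = (110)_2$ is not a pure shift, so the dyadic expansion of $6k$ can have carries that alter $e_1' - e_2'$ depending on the low bits of $k$; the safe way through is to note that in all three branches of Proposition \ref{prop:2adicexp} one has $l_{6k+k'} \geq e_1' - 2$, and then show $e_1' \geq e_1 + 3$ whenever $k \geq 2$ has a nonzero bit below its top bit (so that $6k$'s carry bumps the top exponent), while in the remaining case $k = 2^{e_1}$ one computes $6k + k' = 2^{e_1+2} + 2^{e_1+1} + k'$ explicitly and reads off $l = e_1 + 1$, $t = 3$ (or the $k'$-perturbed analogue) directly. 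Either way the bound $2^{l} \geq 2k$ survives, and the rest is the elementary inequality $12k - 1 > 6k + 6$.
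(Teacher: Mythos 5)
Your overall strategy coincides with the paper's: reduce everything to the dyadic expansion of $k$ (where $n=6k+k'$), read off $l_{6k+k'}$ from Proposition \ref{prop:2adicexp}, and compare $6\cdot 2^{l_{6k+k'}}-1$ against $6k+6$. The problem is the quantitative core of your steps (2)--(3). Writing $e_1=\lfloor\log_2 k\rfloor$, you have $2^{e_1}\le k$ and hence $2^{e_1+1}\le 2k$, so from $l_{6k+k'}\ge e_1+1$ the most you can conclude is $2^{l_{6k+k'}}\ge 2^{e_1+1}\ge k+1$, \emph{not} $2^{l_{6k+k'}}\ge 2k$. The stronger claim is genuinely false: take $k=5$, $k'=0$, so $n=30=2^4+2^3+2^2+2^1$; the gap between the top two exponents is $1$, so Proposition \ref{prop:2adicexp} gives $l_{30}=4-1=3$ (indeed $LH(30)=(15,7,3)$), and $2^{l_{30}}=8<10=2k$. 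The auxiliary claim in your last paragraph is also wrong: multiplication by $6$ bumps the top exponent to $e_1+3$ only when the second-highest bit of $k$ is adjacent to the highest (e.g.\ $k=6=2^2+2^1$ gives $6k=36=2^5+2^2$), not whenever some lower bit of $k$ is set ($k=5=2^2+2^0$ gives $6k=30$, whose top exponent is $4=e_1+2$).

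With the correct bound $2^{l_{6k+k'}}\ge k+1$ you only obtain $6\cdot 2^{l_{6k+k'}}-1\ge 6k+5$, which falls one short of the uniform target $6k+6$, and the shortfall is not an artifact of a lossy estimate: for $k=3$, $k'=0$ one has $l_{18}=2$ and $6\cdot 2^{2}-1=23<24=6k+6$ (the proposition survives there only because the cost from Corollary \ref{cor:count} in the case $k'=0$ is $6k=18$, not $6k+6$). So the comparison must be matched to $k'$: the value $6k+6$ occurs only for $k'=5$, and in that case you need the sharper estimate $2^{l_{6k+5}}\ge k+2$, which in turn requires a case analysis on the gap $e_1-e_2$ of the kind the paper carries out (its cases $N_k=1$, $1\le e_1-e_2\le 2$, $e_1-e_2>2$, and $k=2^{1+e_1}-1$ are each engineered to deliver $2^{l_{6k+k'}}\ge k+1+1/6$). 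As written, your steps (2)--(4) do not close, and the elementary inequality you reduce to, $12k-1>6k+6$, is not the one your estimates actually support.
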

\begin{proof}We let $n$ stand for the number of crossings of the maximal tassel under study, with $n\geq 12$, the other cases being left for the reader. We write $n=6k+k'$ with $k, k'$ positive integers and $0 \leq k' < 6$. We will prove that the difference between the situation where the least number of crossings are created leaning on Proposition \ref{prop:Tenevaremove}, $6\cdot 2^{l_{6k+k'}}-1$, and the situation where more crossings are created leaning on Corollary \ref{cor:count}, $6k+6$, is always non-negative. This amounts to proving that $2^{l_{6k+k'}}-k-1-1/6$ is non-negative. Let
\[
k=2^{e_1} + 2^{e_2} + \dots + 2^{e_{N_k}}, \qquad \qquad \text{ with } \quad e_1 > e_2 > \dots > e_{N_k} \geq 0
\]
be the dyadic expansion of $k$.

The proof will be split into different instances.
\begin{itemize}
\item If $N_k =1$ i.e., $k=2^e$, then
\[
6k+k'=(2^2+2)\cdot 2^e + k'=2^{2+e}+2^{1+e}+k'
\]
and since $2+e-(1+e)=1$ then $l_{6k+k'}=2+e-1=1+e$, according to Proposition \ref{prop:2adicexp}. Then
\[
2^{l_{6k+k'}}-k-1-1/6 = 2^{1+e}-2^e-1-1/6=2^e-1-1/6>0
\]
\item We now assume $N_k \geq 2$, such that $1\leq e_1-e_2\leq 2$ and $\{ e_{N_k}, \dots , e_2, e_1  \} \subsetneq \{ 0, 1, 2, \dots e_1 -2, e_1-1, e_1 \}$

The latter condition implies that
\[
k \leq -1 + \sum_{s=0}^{e_1}2^s  = -1 + \frac{2^{1+e_1}-1}{2-1} = 2^{1+e_1} - 2
\]
On the other hand
\[
6k+k' \geq 6k > 2^2 k = 2^2(2^{e_1} + 2^{e_2} + \dots + 2^{e_{N_k}}) = 2^{2+e_1} + 2^{2+e_2} + \dots + 2^{2+e_{N_k}}
\]
so
\[
1\leq 2+e_1-(2+e_2) = e_1-e_2 \leq 2
\]
thus
\[
l_{6k+k'} \geq (2+e_1)-1 = 1 + e_1
\]
Then
\[
2^{l_{6k+k'}}-k-1-1/6 \geq 2^{1+e_1} - 2^{1+e_1} + 2 -1 -1/6 = 5/6 > 0
\]

\item We now assume $e_1-e_2>2$. We remark that this implies that $\{ e_{N_k}, \dots , e_2, e_1  \} \subsetneq \{ 0, 1, 2, \dots e_1 -2, e_1-1, e_1 \}$.
\[
6k+k' = (2^2+2)(2^{e_1} + 2^{e_2} + \dots + 2^{e_{N_k}}) + k' = (2^{2+e_1} + 2^{e_1}) + (2^{2+e_2} + 2^{e_2})  + \dots + (2^{e_{2+{N_k}}} + 2^{e_{N_k}}) + k'
\]
so
\[
2+e_1-e_1=2 \qquad \qquad \text{ and then } \qquad \qquad l_{6k+k'} = 2+e_1 -1 = 1+e_1
\]
Then
\begin{align*}
2^{l_{6k+k'}}-k-1-1/6 &= 2^{1+e_1}-(2^{e_1} + 2^{e_2} + \dots + 2^{e_{N_k}}) - 1 - 1/6 \geq \\
& \geq 2^{1+e_1} - \sum_{s=0}^{e_1}2^s + 2^{1+e_2} + 2^{2+e_2} -1-1/6  = 2^{1+e_2} + 2^{2+e_2} - 1/6 > 0
\end{align*}

\item Finally we assume that $k=2^{e_1} + 2^{e_1-1} + 2^{e_1-2}+\dots +   2^{2}+  2^{1}+1 = 2^{1+e_1}-1$. Then
\begin{align*}
6k+k'&=(2^2+2)(2^{e_1} + 2^{e_1-1} + 2^{e_1-2}+\dots +   2^{2}+  2^{1}+1)+k' = \\
&=(2^{2+e_1}+2^{1+e_1}) + (2^{1+e_1}+2^{e_1}) +  \dots + (2^{2+1} + 2^{1+1})+(2^{2+0} + 2^{1+0})+k' = \\
&=2^{2+e_1}+(2^{1+e_1} + 2^{1+e_1}) + (2^{e_1} + 2^{e_1}) + \dots +  (2^{1+1}+2^{2+0}) + 2^{1+0}+k' =\\
&=2^{3+e_1}+2^{1+e_1}+\dots +2^3 + 2^1 +k'
\end{align*}
so
\[
3+e_1-(1+e_1) = 2 \qquad \qquad \text{ and then } \qquad \qquad 2^{l_{6k+k'}}=2^{2+e_1}
\]
and so
\[
2^{l_{6k+k'}}-k-1-1/6 = 2^{2+e_1} - 2^{1+e_1} + 1 -1 -1/6 = 2^{1+e_1}  -1/6 > 0
\]
\end{itemize}
This concludes the proof.
\end{proof}

We remark that when choosing the Teneva Game for the delunification process it is perhaps not wise to perform the game to the end. For instance, if the last tassels are $\sigma_1^{\pm 2}$ each of them will contribute with $8$ extra crossings to the delunification process. Had we stopped at the previous step, then the last tassels would have been $\sigma_1^{\pm 6}$ or $\sigma_1^{\pm 5}$ and each of these would have contributed with only $6$ extra crossings to the delunification process. We then propose to use what we call the \emph{Truncated Teneva Game} which we now describe. We set out to perform a regular Teneva Game on the maximal tassel under study but after the performance of a set of Teneva transformations we take the resulting tassels as the final tassels and use Corollary \ref{cor:count} to calculate the number of extra crossings thus obtained. We then ascertain whether it is worth it to carry on the Teneva Game or not. If it is not we stop the Teneva Game here. At the time of writing we do not if it is better to use the Truncated Teneva Game.

\bigbreak

At this point it becomes clear that minimizing the number of crossings is another issue in the delunification process. We now introduce the relevant definitions for dealing with this.

\begin{def.}\label{def:lunefreexingnumber}$[\emph{Lune-free crossing number}]$
Let $L$ be a link. The {\rm Lune-free crossing number of $L$}, notation $LFC\,  (L)$, is the minimum number of crossings needed to assemble a lune-free diagram of $L$.
\end{def.}

\begin{def.}\label{def:plunefreexingnumber}$[\emph{p-Lune-free crossing number}]$
Let $p$ be an odd prime. Let $L$ be a link admitting non-trivial $p$-colorings. The {\rm p-Lune-free crossing number of $L$}, notation $LFC_p\,  (L)$, is the minimum number of crossings needed to assemble a lune-free diagram of $L$ supporting a $p$-minimal coloring.
\end{def.}

\begin{def.}\label{def:pxingnumber}$[\emph{p-crossing number}]$
Let $p$ be an odd prime. Let $L$ be a link admitting non-trivial $p$-colorings. The {\rm p-crossing number of $L$}, notation $C_p\,  (L)$, is the minimum number of crossings needed to assemble a diagram of $L$ supporting a $p$-minimal coloring.
\end{def.}

Figures \ref{fig:3lfctrefoil}, \ref{fig:lffig8} and \ref{fig:min5lffig8} provide illustrative examples for these definitions. Furthermore, in Figure \ref{ff3}, as a result of listing all the lune-free diagrams with $8$ crossings, it is shown that $LFC\, (\text{figure $8$}) = 8$ (cf. Figure \ref{fig:lffig8}).

\begin{figure}[!ht]
    \psfrag{a}{\huge$0$}
    \psfrag{b}{\huge$1$}
    \psfrag{2a-b}{\huge$2a-b$}
    \psfrag{2b-a}{\huge$2$}
    \psfrag{2b-a1}{\Large$2b-a$}
    \psfrag{3b-2a}{\huge$0$}
    \psfrag{3b-2a1}{\Large$3b-2a$}
    \psfrag{4b-3a}{\huge$1$}
    \centerline{\scalebox{.4}{\includegraphics{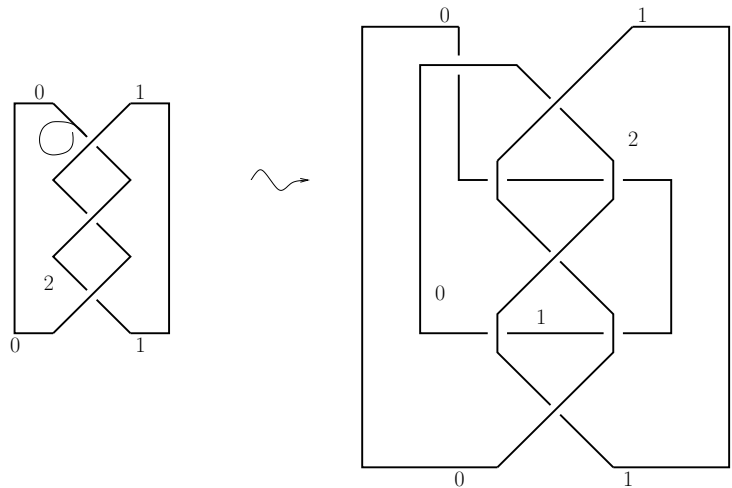}}}
    \caption{Illustrating Definitions \ref{def:lunefreexingnumber}, \ref{def:plunefreexingnumber}, and \ref{def:pxingnumber}: $LFC\, (\text{trefoil}) = 8 = LFC_3\, (\text{trefoil})$, using the result in \cite{EliahouHararyKauffman} stating that no lune-free diagram has less than $8$ crossings. Clearly, $C_3\, (\text{trefoil})=3$}\label{fig:3lfctrefoil}
\end{figure}

\begin{figure}[!ht]
    \psfrag{0}{\huge$0$}
    \psfrag{1}{\huge$1$}
    \psfrag{2a-b}{\huge$2a-b$}
    \psfrag{2}{\huge$2$}
    \psfrag{2b-a1}{\Large$2b-a$}
    \psfrag{3b-2a}{\huge$0$}
    \psfrag{3}{\huge$3$}
    \psfrag{4}{\huge$4$}
    \centerline{\scalebox{.4}{\includegraphics{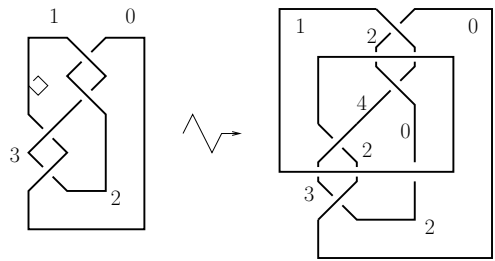}}}
    \caption{Illustrating Definitions \ref{def:lunefreexingnumber}, \ref{def:plunefreexingnumber}, and \ref{def:pxingnumber}: This figure shows that $LFC\, (\text{figure $8$}) \leq 9$. It also illustrates that a lune-free diagram with less crossings may not comply with a non-trivial coloring with less colors. $C_5\, (\text{figure $8$})=4$ as can be seen from the diagram on the left since a crossing number of $3$ has to do with a diagram of the trefoil.}\label{fig:lffig8}
\end{figure}

\begin{figure}[!ht]
    \psfrag{0}{\huge$0$}
    \psfrag{1}{\huge$1$}
    \psfrag{2a-b}{\huge$2a-b$}
    \psfrag{2}{\huge$2$}
    \psfrag{2b-a1}{\Large$2b-a$}
    \psfrag{3b-2a}{\huge$0$}
    \psfrag{3}{\huge$3$}
    \psfrag{4}{\huge$4$}
    \centerline{\scalebox{.4}{\includegraphics{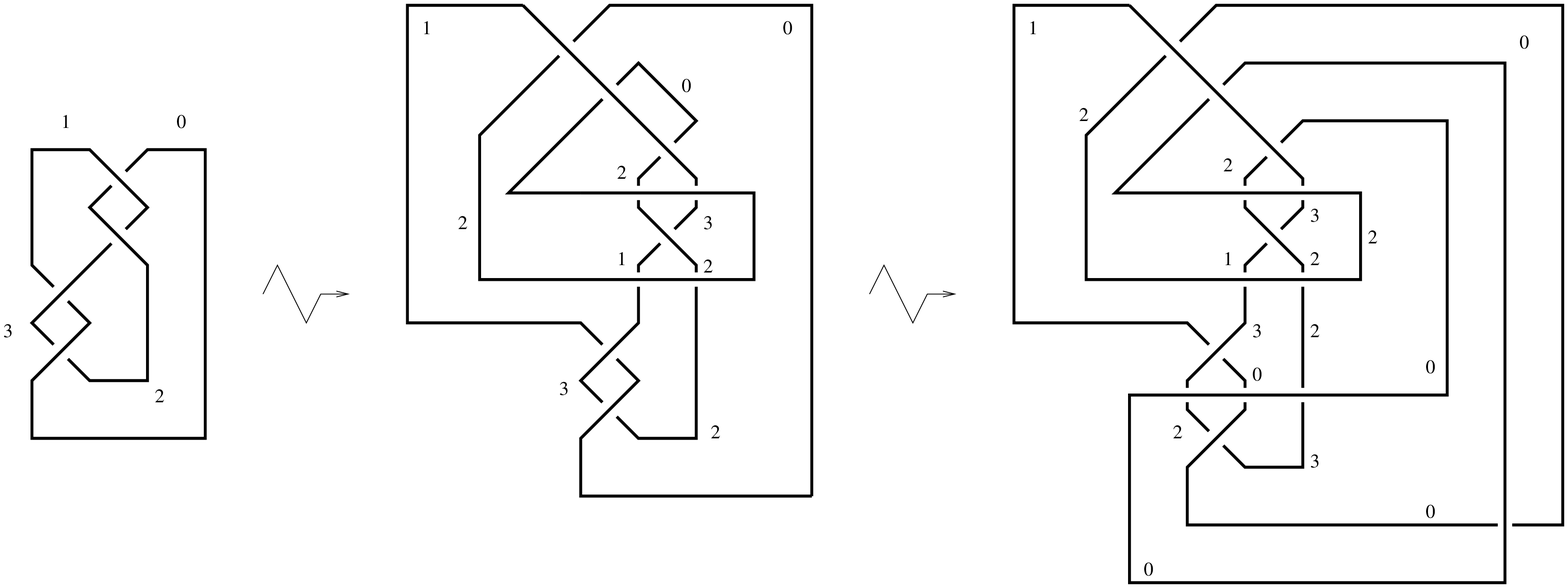}}}
    \caption{Illustrating Definitions \ref{def:lunefreexingnumber}, \ref{def:plunefreexingnumber}, and \ref{def:pxingnumber}: This figure shows that $LFC_5\, (\text{figure $8$}) \leq 10$. We mimic here the technique of Main Lemma \ref{lemma: main} illustrated in Figure \ref{fig:mainl} for the $\sigma_2^{-2}$  and use the rest of the construction for the $\sigma_1^2$.}\label{fig:min5lffig8}
\end{figure}
\bigbreak

\section{Lower bounds on the minimum number of colors: a theory of ``Grey Sets''.}\label{sect:grey}

\noindent

Consider a positive integer $m>2$ and assume $L$ is a link admitting non-trivial $m$-colorings. In order to ascertain if some integer equals $mincol_m\, L$ it is useful to know a lower bound for the minimum number of colors on that modulus. Things are fairly simple for the prime moduli up to $7$ because we know which are the corresponding minimum number of colors. On the other hand, for larger primes, we suspect that the coloring structure is more complex as for instance there being distinct links admitting non-trivial $p$-colorings with distinct minimum numbers of colors. Thus the knowledge of lower bounds on numbers of colors is helpful because the upper bounds are automatically set once we (non-trivially) color a diagram of the link at stake, and in the case of alternating knots of prime determinant, once we know their crossing numbers (\cite{Frank, msolis}).

It is in this set up that we introduce the {\it grey sets}, after recalling the notion of $m$-coloring automorphism (\cite{GJKL, elhamdadi}).
\begin{def.}\label{def:m-coloringauto}$[\emph{m-coloring automorphism} (\cite{GJKL, elhamdadi})]$
Let $m$ be a positive integer.

An $m$-coloring automorphism is a bijection of the integers modulo $m$, $\mathbf{Z}/m\mathbf{Z}$, such that for any $a, b\in \mathbf{Z}$,
\[
f(a\ast b) = f(a)\ast f(b) \quad \text{ mod } m,
\]
where, for any $x, y\in \mathbf{Z}$
\[
x\ast y := 2y-x.
\]

Furthermore, any $m$-coloring automorphism is of the form $f(x)=\mu x + \lambda$ where $\mu$ is a unit from $\mathbf{Z}/m\mathbf{Z}$, and $\lambda$ is any element from $\mathbf{Z}/m\mathbf{Z}$.
\end{def.}
\begin{prop}\label{prop:colauto}Given an integer $m>1$, let $L$ be a link admitting non-trivial $m$-colorings. Let $f$ be an $m$-automorphism. Let $D$ be a diagram of $L$ endowed with a non-trivial $m$-coloring whose distinct colors are $(c_i)_{i=1, \dots , n}$.

Then, $(f(c_i))_{i=1, \dots , n}$ are also the distinct colors of a$($nother$)$ non-trivial $m$-coloring of $D$.

Furthermore, if $m=2k+1$, for some positive integer $k$ then $(c_i)_{i=1, \dots , n}\nsubseteq \{ 0, 1, 2, \dots , k \}$ mod $m$.
\end{prop}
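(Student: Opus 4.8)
The plan is to treat the two assertions separately, since the second is logically independent of the first despite the ``Furthermore''.

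For the first assertion, I would simply recolor $D$ by assigning to the arc that carried color $c$ the new color $f(c)$, obtaining a labeling $D'$. To see $D'$ is a legitimate $m$-coloring it suffices to check the Fox relation at each crossing: if in the original coloring the over-arc has color $b$ and the under-arcs have colors $a$ and $a\ast b = 2b-a$, then in $D'$ they become $f(b)$, $f(a)$ and $f(2b-a)=f(a\ast b)=f(a)\ast f(b)$, where the last equality is precisely the defining property of an $m$-coloring automorphism. Since $f$ is a bijection of $\mathbf{Z}/m\mathbf{Z}$, the colors occurring in $D'$ are exactly $f(c_1),\dots,f(c_n)$, pairwise distinct by injectivity of $f$; in particular $D'$ uses the same number $n\geq 2$ of colors as $D$, hence is non-trivial. (Only the abstract property of $f$ is used here; the explicit form $f(x)=\mu x+\lambda$ is not needed.)

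For the ``Furthermore'' I would argue by contradiction. Assume $m=2k+1$ and that every $c_i$, reduced mod $m$, lies in $\{0,1,\dots,k\}$. The key point is that this confinement forces the coloring relations to hold over $\mathbf{Z}$, not merely modulo $m$: at any crossing with over-arc color $b$ and under-arc colors $a,c$ chosen in $\{0,1,\dots,k\}$, the integers $a+c$ and $2b$ both lie in $\{0,1,\dots,2k\}=\{0,1,\dots,m-1\}$, so being congruent mod $m$ they are equal as integers, $a+c=2b$. Hence the representatives in $\{0,\dots,k\}$ give a labeling of the arcs of $D$ by integers satisfying $a+c=2b$ exactly at every crossing, i.e.\ a non-constant $\mathbf{Z}$-coloring of $L$ (non-constant because $n\geq 2$).

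I would then invoke the standing hypothesis that $L$ has non-zero determinant to rule this out. Reducing the integer colors modulo any prime $q$ still satisfies the Fox relations; for $q>k$ the $n$ distinct integer colors remain distinct mod $q$, so this yields a non-trivial $q$-coloring of $L$, and choosing such a $q$ coprime to $\det L$ (possible since $\det L\neq 0$) contradicts the standard fact that a link is non-trivially $q$-colorable only if $q\mid\det L$. (Equivalently, a non-constant $\mathbf{Z}$-coloring would make the reduced coloring matrix singular over $\mathbf{Q}$, forcing $\det L=0$.) This contradiction gives $(c_i)_{i=1,\dots,n}\nsubseteq\{0,1,\dots,k\}\bmod m$. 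The first assertion is routine; the substantive step is this last one, and its crux is the upgrade from a mod-$m$ coloring confined to $\{0,\dots,k\}$ to a genuine $\mathbf{Z}$-coloring, together with the (elementary but essential) input that links of non-zero determinant admit no non-trivial integer coloring.
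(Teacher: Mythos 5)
Your proof is correct and follows essentially the same route as the paper's: the first assertion by checking that $f$ preserves the crossing relation, and the second by observing that confinement of the colors to $\{0,1,\dots,k\}$ forces each relation $a+c\equiv 2b$ to hold as an equality of integers, which is incompatible with $\det L\neq 0$. The paper dispatches the final contradiction in one sentence, whereas you spell it out by reducing the resulting $\mathbf{Z}$-coloring modulo a prime coprime to the determinant; this is just a more explicit rendering of the same argument.
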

\begin{proof}
The first statement is clear since an $m$ automorphism preserves the $\ast$ operation.

For the proof of the second statement, assume $(c_i)_{i=1, \dots , n}\subset \{ 0, 1, 2, \dots , k \}$ mod $m$. Then the sum of any two colors from this set, or twice any color from this set is strictly less than $2k+1$. This means that any coloring condition satisfied by elements from this set, say $c_{s_i}+c_{s_j}=2c_{s_l}$, is true modulo any integer. This is absurd since as remarked in the introduction we only work with links of non-null determinant.
\end{proof}

The material developing Definition \ref{def:m-coloringauto} into Proposition \ref{prop:colauto} and other results is found in \cite{GJKL}. We are now ready to define \emph{Grey Sets}.

\begin{def.}\label{def:grey}$[\emph{Grey Sets}]$
Let $p(=2k+1)$ be an odd prime.

Let $S$ be a subset of the integers modulo $p$. $S$ is called a \emph{Grey Set} $($or $p$-Grey Set, when there is need to emphasize the modulus$)$ if there is a $p$-coloring automorphism, $f$, such that
\[
f(S) \subseteq \{ 0, 1, 2, \dots , k  \} \qquad \text{ mod } \, p
\]
\end{def.}

That is, a non-trivial $p$-coloring cannot be assembled with the colors from a $p$-grey set alone.
\bigbreak

Lemma \ref{lem:2setsaregrey} proves this is not a vacuous notion.

\begin{lem}\label{lem:2setsaregrey}
Let $p(=2k+1)$ be an odd prime. If $p\geq 3$ $($respect., $p\geq 5)$ then a set with at most two $($respect., at most three$)$ elements modulo $p$ is a $p$-grey set.
\end{lem}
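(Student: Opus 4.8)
The plan is to place any small subset into a normal form by exploiting the full affine group of $p$-coloring automorphisms from Definition \ref{def:m-coloringauto}, and then to check that the normal form, or one further affine image of it, lands inside $\{0,1,\dots,k\}$ so that Definition \ref{def:grey} is satisfied.

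First I would dispose of the case $|S|\le 2$. If $S=\emptyset$ or $S=\{a\}$, the translation $x\mapsto x-a$ already works, since $0\in\{0,\dots,k\}$. If $S=\{a,b\}$ with $a\neq b$, then $b-a$ is a unit because $p$ is prime, so $f(x)=(b-a)^{-1}(x-a)$ is a $p$-coloring automorphism with $f(S)=\{0,1\}$, which is contained in $\{0,\dots,k\}$ as soon as $k\ge 1$, i.e.\ $p\ge 3$. This proves the first assertion.

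For $|S|=3$, say $S=\{a,b,c\}$ with $a,b,c$ pairwise distinct, the same idea normalizes $S$: applying $x\mapsto (b-a)^{-1}(x-a)$ turns $S$ into $\{0,1,\delta\}$ for some $\delta\in\mathbf{Z}/p\mathbf{Z}\setminus\{0,1\}$, i.e.\ $\delta\in\{2,3,\dots,2k\}$. Then I would split into cases: (i) if $2\le\delta\le k$ then $\{0,1,\delta\}\subseteq\{0,\dots,k\}$ and we are done; (ii) if $k+1\le\delta\le 2k$, set $e:=p-\delta\in\{1,\dots,k\}$ and apply the translation $x\mapsto x+e$; since $\delta+e\equiv 0\pmod p$, the set becomes $\{0,e,e+1\}$, which lies in $\{0,\dots,k\}$ whenever $e\le k-1$; (iii) the only leftover case is $e=k$, i.e.\ $\delta=k+1$, where $S=\{0,1,k+1\}$, and here I would apply $x\mapsto 2x$ and use $2(k+1)=2k+2\equiv 1\pmod p$ to obtain $\{0,1,2\}\subseteq\{0,\dots,k\}$. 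In every case the composite map has the form $x\mapsto\mu x+\lambda$ with $\mu$ a unit, hence is a genuine $p$-coloring automorphism, so $S$ is a $p$-grey set; the three sub-cases all require $k\ge 2$, i.e.\ $p\ge 5$.

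The only genuinely delicate point is case (iii): the naive translations get ``stuck'' precisely on the configuration $\{0,1,k+1\}$, and the key observation that unsticks it is that multiplication by $2$ wraps $k+1$ around to $1$ modulo $p=2k+1$. I would close by remarking that the hypotheses are sharp: for $p=3$ the set $\{0,1,2\}=\mathbf{Z}/3\mathbf{Z}$ is not grey, since an affine bijection cannot compress it into the two-element set $\{0,1\}$, which is exactly why the three-element statement needs $p\ge 5$.
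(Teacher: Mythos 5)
Your argument is correct, and it follows the same overall strategy as the paper's proof: normalize $S$ with an affine $p$-coloring automorphism to $\{0\}$, $\{0,1\}$, or $\{0,1,\delta\}$, and then push the three-element normal form into $\{0,1,\dots,k\}$ with a further affine map. The two-element case is identical to the paper's. Where you diverge is in the endgame for $|S|=3$ with $k+1\le\delta\le 2k$: the paper first applies the doubling map $x\mapsto 2x$ on the subrange $2k+2\le 2\delta\le 3k+1$, and on the complementary subrange it translates by $l$ after splitting into the cases $k=2l$ and $k=2l+1$; you instead translate by $e=p-\delta$ first, which succeeds for every $\delta$ in the range except the single configuration $\{0,1,k+1\}$, and only then invoke doubling (using $2(k+1)\equiv 1 \bmod p$) to resolve that one exception. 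Your decomposition is cleaner: it avoids the parity split on $k$ entirely and isolates the genuinely obstructed configuration, whereas the paper's version requires checking two interval computations that depend on whether $k$ is even or odd. I verified your case (ii): under $x\mapsto x+e$ the set $\{0,1,\delta\}$ becomes $\{e,e+1,0\}$, which lies in $\{0,\dots,k\}$ precisely when $e\le k-1$, leaving only $e=k$, i.e.\ $\delta=k+1$, for case (iii); and there $\{0,1,k+1\}\mapsto\{0,2,1\}$ under doubling, which needs $k\ge 2$, consistent with the hypothesis $p\ge 5$. Your closing sharpness remark for $p=3$ (a bijection cannot map a three-element set into the two-element set $\{0,1\}$) is a worthwhile addition that the paper does not make explicit.
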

\begin{proof}
The strategy of the proof is to find $p$-coloring automorphisms which alone or composed will map the potential coloring set into $\{ 0, 1, 2, \dots , k \}$. Then Proposition \ref{prop:colauto} implies that this set cannot be a coloring set concluding the proof. We will let $S$ denote the potential coloring set.

We first consider the $p\geq 3$ instance. Let $S = \{  a \}$ be the set described in the statement. Then $f(S)= \{ 0 \}$ with $f(x)=x-a$.

Let $S = \{  a, b \}$ be the subset described in the statement with $0\leq a < b < p$. Then $g(S)= \{ 0, 1 \}$ with 
\[
g(x)=(b-a)^{-1}(x-a) \qquad \text{ mod } \, p
\]

We now let $p\geq 5$ and note that the possibilities for the cardinality of $S$ to be $2$ or $3$ have already been contemplated. Let then $S= \{ a, b, c  \}$ with $0\leq a < b < c < p$.  Then $g(S)= \{ 0, 1, c' \}$ mod $p$ with $1 < c' < p$. If $c'\leq k$ (where $p=2k+1$) the proof is complete. Otherwise, assume $k + 1 \leq c' \leq 2k $. Then $2k+2 \leq 2c' \leq 4k$. If $2k+2 \leq 2c' \leq 3k+1$, then set $h_1(x)=2x$ mod $p$. Then $(h_1\circ g) (S) = \{ 0, 2, c'' \}$ with $1 \leq c'' \leq k$ mod $p$.

Otherwise assume $3k+2 \leq 2c' \leq 4k$. If $k=2l$ (so that $p=2k+1=4l+1$) then $3l+1 \leq c' \leq 4l$. With $h_2(x) = x+l$ mod $p$ we have $(h_2 \circ g) (S) = \{ l, l+1, c'' \}$ with $0 \leq c'' \leq l-1$ mod $p$. Finally, if $k=2l+1$ (so that $p=2k+1=4l+3$) then $3l+3 \leq c' \leq 4l+1$. With $h_3(x)= x + l$ mod $p$ we have $(h_2 \circ g) (S) = \{ l, l+1, c'' \}$ with $0 \leq c'' \leq l-2$ mod $p$. The proof is complete.
\end{proof}

\begin{def.}\label{def:grey index}$[\emph{p-grey Index; rainbow Index}]$
Let $p$ be an odd prime. Let $G_p$ be the largest integer such that any subset of the integers mod $p$  with $G_p$ elements is a $p$-grey set. We call $G_p$ the \emph{grey index of} $p$.

We write
\[
algmincol_p := 1+G_p
\]
and call it the \emph{rainbow index of} $p$.
\end{def.}
\bigbreak

We have done some brute force attempts at calculating the rainbow index for a number of prime moduli with the help of computers; the results are displayed in Table  \ref{Ta:algmincol}.
\begin{table}[h!]
\begin{center}
\scalebox{.9}{\begin{tabular}{| c || c | c | c | c | c |  c |  c | c | c | c | c |  c | c | }\hline
prime \,  $p$ & $3$  & $5$ & $7$ & $11$& $13$ & $17$&  $19$&   $23$&  $29$&  $31$&  $37$&  $41$&  $43$ \\ \hline
$algmincol_p$ & $3$  & $4$ & $4$ & $5$ & $5$  & $6$ &  $6$&    $6$&  $6$&  $6$&  $6$&  $6$&  $6$  \\ \hline\hline
$t_p+2l_p-1$  & - & - & - & $5$ & $6$ & $7$ &  $7$&    $7$&  $8$&  $8$&  $9$&  $9$&  $9$  \\ \hline
\end{tabular}}
\caption{The rainbow index for a number of primes $p$. Note the plateau on $6$ for primes $17$ through $43$. The $t_p+2l_p-1$-line displays upper bounds for the minimum number of colors mod $p$ for $T(2, p)$, rendering the corresponding $algmincol_p$ plausible.}
\label{Ta:algmincol}
\end{center}
\end{table}

Although the plateau on $6$ for the primes $17$ through $43$ seemed strange at first, we realized there were already a number of examples that complied with this data. For example, from Theorem \ref{thm:thenevagame} (see also \cite{klgame}) we obtain the following for torus knots of type $(2, p)$ (note that such a torus knot for prime $p$ has determinant $p$):
\[
mincol_p\, T(2, p) \leq t_p + 2l_p -1
\]
where $l_p$ and $t_p$ are respectively, the length and the tail of the sequence of lower halves for $p$. This estimate complies with the $algmincol_p$ of $6$ for all primes from $17$ through $43$. We show the calculations for $p=17$ and $p=19$ and display the results obtained in the bottom line of Table \ref{Ta:algmincol}.
\[
17 = 2\times 8 + 1 = 2 \times (2\times 4) +1 \quad \qquad LH(17) = (8, 4), \quad l_{17} = 2, \quad t_{17} = 4, \qquad t_{17} + 2l_{17} -1 = 4+4-1 = 7
\]
\[
19 = 2\times 9 + 1 = 2 \times (2\times 4 + 1) +1 \quad \quad LH(17) = (9, 4), \quad l_{19} = 2, \quad t_{19} = 4, \quad t_{19} + 2l_{17} -1 = 4+4-1 = 7
\]
%\begin{table}[h!]
%\begin{center}
%\scalebox{.9}{\begin{tabular}{| c ||    c |  c | c | c | c | c |  c | c | }\hline
%prime \,  $p$ &  $17$&  $19$&   $23$&  $29$&  $31$&  $37$&  $41$&  $43$ \\ \hline
%$t_p+2l_p-1$    & $7$ &  $7$&    $7$&  $8$&  $8$&  $9$&  $9$&  $9$  \\ \hline
%\end{tabular}}
%\caption{The estimate $t_p+2l_p-1$  complies with the plateau on $6$ for primes $17$ through $43$.}
%\label{Ta:tp+2lp-1}
%\end{center}
%\end{table}

We now stand on firmer grounds in order to look for minimum number of colors for links albeit only up to modulus $43$.

\bigbreak

The following questions seem to be in order at this point. We let $p=2k+1$ stand for an odd prime.
\begin{enumerate}
\item Can we treat the grey index theoretically? That is, are there other ways of calculating/estimating it besides brute force?

\item  How does $G_{2k+1}$ evolve with $k$ ?

\item  Given a modulus $p$, is there a knot (link) $K$ such that $mincol_p\,  K = algmincol_p$ ? If there is a Common $p$-Minimal Sufficient Set of Colors is its cardinality $algmincol_p$ ?
\end{enumerate}
\bigbreak

\section{Algorithms and computational results.}\label{sect:algo}

According to Conway \cite{Conway}, links can be divided into two
basic classes: algebraic and non-algebraic. Algebraic links
(numerator closures of algebraic tangles) can be obtained from
elementary tangles $0$, $1$, and $-1$ using three operations for
the derivation of algebraic tangles: sum, product, and ramification.
A {\it basic polyhedron} \cite{Conway, Kirkman, Kirkman1, Caudron, JablanSazdanovic} (or a {\it
lune-free diagram} \cite{EliahouHararyKauffman}) is a link diagram
without two-sided regions  (bigons). As a graph, it is a 4-valent,
4-edge connected, at least 2-vertex connected graph without bigons.
The main difference between basic polyhedra and the geometrical
polyhedra is that the geometrical polyhedra has to be 3-vertex
connected, and basic polyhedra may be 2-vertex connected. A {bigon
collapse} (or bigon contraction) is the operation that can be used
in order to distinguish algebraic links from non-algebraic ones: after
complete bigon collapse, an algebraic link collapses to a closure of
the tangle $1$, and every minimal (with respect to the number of crossings) diagram of a non-algebraic (or
polyhedral) link collapses into some basic polyhedron.

The first
problem is the derivation of basic polyhedra. This problem was
solved for $n\le 12$ crossings by T.P. Kirkman \cite{Kirkman, Kirkman1}. J.H.
Conway used basic polyhedra for the derivation of knots and links
with $n\le 11$ crossings and for the Conway notation, where polyhedral
links are derived by substituting crossings in basic polyhedra by
algebraic tangles. A. Caudron \cite{Caudron} added the missing
basic polyhedron 12E  to Kirkman's list of basic polyhedra. Hence, the
complete list of basic polyhedra with $n\le 12$ crossings
contains one basic polyhedron $6^*$ with $n=6$ (Borromean rings),
one basic polyhedron $8^*$ (knot $8_{18}$), one basic polyhedron
with $n=9$(knot $9_{40}$), three basic polyhedra $10^*$-$10^{***}$
with $n=10$ (where among them the only knot is $10_{123}=10^*$),
three basic polyhedra $11^*$-$11^{***}$ with $n=11$ and 12 basic
polyhedra 12A-12L with $n=12$ crossings. Derivation of basic
polyhedra with more than $n=12$ crossings became possible thanks to
the use of the computer program  "plantri" written by G. Brinkmann
and B. McKay \cite{BrinkmannMcKay}. In the program {\it LinKnot} \cite{JablanSazdanovic} we
provide the list of basic polyhedra with $n\le 20$ crossings, which
contains 19 basic polyhedra with $n=13$, 64 with $n=14$, 155 with
$n=15$, 510 with$n=16$, 1514 with $n=17$, 5145 with $n=18$, 16966
with $n=19$, and 58782 with $n=20$ crossings.

According to Theorem \ref{thm:main}, every knot $K$ admitting non-trivial $p$-colorings has a $p$-minimal diagram
which is lune-free. Hence, for the exhaustive derivation of such
colorings we implemented the following algorithm:

\bigskip

{\bf Algorithm 1}:
\begin{enumerate}
\item take a basic polyhedron $B$ which is a knot;
\item make all crossing changes in $B$;
\item recognize all knots represented by diagrams obtained in (2) and select among them
diagrams representing $K$;
\item make all $p$-colorings of these diagrams and find the coloring
with the smallest number of colors;
\item apply (1)-(4) to all basic polyhedra (in ascending order), until the first
diagram of $K$  which supports $mincol_p(K)$ is obtained.
\end{enumerate}

In this paper we applied this algorithm to all basic polyhedra with
$n\le 16$ crossings. For all computations we used the program {\it
LinKnot} \cite{JablanSazdanovic}.

This and the next algorithm guarantee the exact computation of the
numbers $LFC_p(K)$ and $C_p(K)$. Notice that by the reduction of the
number of colors ``by hand'' we can  never be sure that we obtained
exact values of these numbers. E.g., knot $7_3$ with $p=13$ and
$mincol_p(7_3)=5$ is represented in \cite{lopesp11} (Fig. 10) with a
lune-free diagram supporting $mincol_p\,(7_3)=5$ with $n=18$
crossings has $LFC_p(7_3)=16$, supported on the lune-free diagram
$16110^*-1.1.1.1.-1.1.-1.-1.-1.1.1.-1.1.-1.1.-1$ (Fig. 1).

\begin{figure}[th]
\centerline{\psfig{file=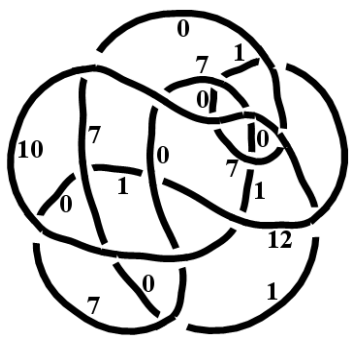,width=1.80in}} \vspace*{8pt}
\caption{Lune-free diagram
$16110^*-1.1.1.1.-1.1.-1.-1.-1.1.1.-1.1.-1.1.-1$ of the knot $7_3$
with $n=16$ crossings. Hence, $LFC_p(7_3)=16$. \label{ff1}}
\end{figure}

The main difficulties for the application of this algorithm are the
enormously large size of the computations (e.g., in order to find
the lune-free diagram for the knot $7_2$ we need to make crossing
changes in all basic polyhedra up to $1595^*$, select 4805 lune-free
diagrams representing $7_2$ and check them for $mincol_p K$), and
the problem of when the algorithm finishes, i.e., step (5). For
$mincol_p K$ we know that $algmincol_p \le mincol_p K$. Hence, if in
step (3) we obtain a number of colors equal to $algmincol_p$
we know that we reached $mincol_p K$. However, we don't know if
for every knot $K$ $mincol_p K = algmincol_p$, i.e., that do not
exist knots for which  $mincol_p K
> algmincol_p$. Therefore, in Table \ref{Ta:Slavik1} we selected only knots
for which we are sure we obtained $mincol_p K$, because
for them  $mincol_p K = algmincol_p$. The other possibility to
confirm $mincol_p K$ on lune-free diagrams is to find any diagram of
$K$ which supports $mincol_p K = algmincol_p$ and which, usually,
has the smaller number of crossings $C_p(K)< LFC_p(K)$ than the
lune-free diagram with the same property. In this case, thanks to
Corollary 2.1 we know that there exists a lune-free diagram of $K$
which supports $mincol_p K = algmincol_p$.

The other algorithm for finding arbitrary diagrams of a knot $K$
supporting $mincol_p K$ is even more complicated, because it works with
all diagrams of $K$, and not just with the lune-free diagrams.

\bigskip

{\bf Algorithm 2}:
\begin{enumerate}
\item take an arbitrary knot diagram $D$;
\item make all crossing changes in $D$;
\item recognize all knots obtained in (2) and select among them
diagrams representing $K$;
\item make all $p$-colorings of these diagrams and find the coloring
with the smallest number of colors;
\item apply (1)-(4) to all diagrams (in ascending order), until the first
diagram of $K$  which supports $mincol_p(K)$ is obtained.
\end{enumerate}

Certainly, because the number of different diagrams given by
crossing changes of the knot $K$ is enormously large, this algorithm is
almost impossible for the practical application, especially because
even small changes in diagrams representing $K$ can result in
different number of colors. E.g., the knot $K=4_1=2\,2$ has
$C_5(4_1)=4$, because its  $mincol_5(4_1)=4$ is supported on its
minimal diagram. Let's consider two non-minimal diagrams of $K$,
$D_1=(-1,1,1,1)\,2$ and $D_2=(1,-1,1,1)\,2$, which differ one from
the other only in that two first crossings changed their places. The
number of colors necessary for coloring $D_1$ is 5, and for $D_2$ is
4 (Figure \ref{ff2}).

\begin{figure}[th]
\centerline{\psfig{file=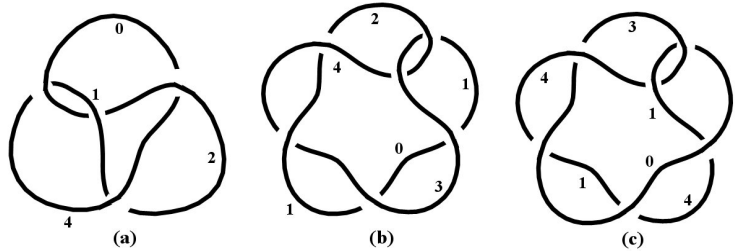,width=3.80in}} \vspace*{8pt}
\caption{(a) Minimal diagram of the knot $4_1=2\,2$ supporting
$mincol_5(4_1)=4$; (b) non-minimal diagram of the same knot colored
with 5 colors; (c) non-minimal diagram of the same knot colored with
4 colors. \label{ff2}}
\end{figure}
%\begin{figure}[!ht]
%    \psfrag{a}{\huge\bf{(a)}}
%    \psfrag{b}{\huge\bf{(b)}}
%    \psfrag{c}{\huge\bf{(c)}}
%    \psfrag{0}{\huge$0$}
%    \psfrag{1}{\huge$1$}
%    \psfrag{2}{\huge$2$}
%    \psfrag{3}{\huge$3$}
%    \psfrag{4}{\huge$4$}
%    \centerline{\scalebox{.5}{\includegraphics{fig3pl.eps}}}
%    \caption{Colorings are understood modulo $5$. (a) Minimal diagram of the knot $4_1=2\,2$ supporting
%$mincol_5(4_1)=4$; (b) non-minimal diagram of the same knot colored
%with 5 colors; (c) non-minimal diagram of the same knot colored with
%4 colors.}\label{ff2}
%\end{figure}
\bigskip

{\bf Question:} Is the knot $4_1$ the only knot supporting
$mincol_p(K)$ ($p\ge 5$) on its minimal diagram?

%\bigskip

%The following is the list of knots with $mincol_p(K)$ supported on
%the lune-free diagrams, obtained by using Algorithm 1. Every knot is
%given by its classical symbol, Conway symbol, $p$, the lune-free
%diagram supporting $mincol_p(K)$, and with $mincol_p(K)$. The knots
%where the lune-free diagram supporting $LFC_p(K)$ is omitted have
%such diagram with more than $n=16$ crossings.

%\bigskip

%\begin{table}[h!]
%\begin{center}
%\scalebox{.9}{\begin{tabular}{| c || c | c | c | c | c |  c |  c | c | c | c | c |  c | c | }\hline
%prime \,  $p$ & $3$  & $5$ & $7$ & $11$& $13$ & $17$&  $19$&   $23$&  $29$&  $31$&  $37$&  $41$&  $43$ \\ \hline
%$algmincol_p$ & $3$  & $4$ & $4$ & $5$ & $5$  & $6$ &  $6$&    $6$&  $6$&  $6$&  $6$&  $6$&  $6$  \\ \hline
%\end{tabular}}
%\caption{The rainbow index for a number of primes $p$. Note the plateau on $6$ for primes $17$ through $43$.}
%\label{Ta:algmincol}
%\end{center}
%\end{table}

%\tiny

%\noindent

\begin{table}[h!]
\begin{center}
\scalebox{.86}{\begin{tabular}{|c|c|c|c|c|c|} \hline

$K$ & $Con$  & $Lune-free$ $diagram$  & $p$ & $mincol_p(K)$ & $LFC_p(K)$  \\
\hline\hline

$4_1$ & $2\,2$  & $111^*-1.1.-1.-1.1.1.1.-1.-1.1.1$  & 5 & 4 & 11  \\
\hline

$5_1$ &  $5$ & $101^*-1.-1.-1.1.-1.1.-1.1.1.1$  &  5 & 4  & 10  \\
\hline

$5_2$ & $3\,2$   & $8^*-1.-1.-1.1.-1.1.1.1$   &  7  & 4 & 8 \\
\hline

$6_2$ &  $3\,1\,2$ & $9^*1.1.1.1.1.1.1.-1.-1$  & 11 &  5  & 9 \\
\hline

$6_3$ & $2\,1\,1\,2$  & $122^*-1.-1.-1.1.-1.1.1.1.1.-1.1.-1$  & 13 &
5 & 12 \\ \hline

$7_1$ &  $7$ & $1420^*1.1.-1.1.1.-1.-1.1.-1.1.-1.-1.1.-1$  &  7 & 4
& 14
\\ \hline

$7_2$ &  $5\,2$ &   & 11 &  5 & \\ \hline

$7_3$ & $4\,3$  & $16110^*-1.1.1.1.-1.1.-1.-1.-1.1.1.-1.1.-1.1.-1$ &
13 &  5 & 16 \\ \hline

$7_5$ &  $3\,2\,2$ & $122^*-1.1.1.1.-1.-1.-1.1.1.-1.1.-1$  & 17 & 6
& 12
\\ \hline

$7_6$ & $2\,2\,1\,2$  & $148^*-1.-1.1.-1.1.1.1.1.1.-1.-1.-1.1.-1$  &
19 & 6 & 14 \\ \hline

$8_2$ &  $5\,1\,2$ &   & 17 &  6 & \\ \hline

$9_{42}$ & $2\,2,3,-2$   &
$1420^*-1.-1.1.1.1.-1.1.-1.1.1.-1.-1.-1.1$ & 7  & 4 & 14 \\ \hline

$10_{128}$ & $3\,2,3,-2$   &   &  5 & 4 & \\ \hline

$10_{125}$ & $5,2\,1,-2$   &   &  11 & 5 & \\ \hline

$10_{132}$ & $2\,3,3,-2$   & $138^*1.-1.1.1.-1.-1.-1.1.1.-1.1.1.1$ &
5 & 4 & 13 \\ \hline

$10_{152}$ & $(3,2)\,-(3,2)$   &   &  11 & 5 & \\ \hline

$10_{154}$ &   $(2\,1,2)\,-(2\,1,2)$ &
  & 13  & 5 & \\ \hline

$10_{161}$ & $3:2\,0:-2\,0$  &
$1315^*-1.-1.1.1.1.1.-1.1.-1.1.1.1.-1$  &  5 & 4 & 13 \\ \hline

$11n19$ & $5,2\,2,-2$  &  $1211^*-1.-1.-1.1.1.1.-1.-1.1.1.1.-1$ &  5
& 4 & 12 \\ \hline

$11n135$ & $2\,2:-2\,0:-2\,0$  &
$1452^*1.1.1.-1.1.-1.1.-1.-1.-1.1.1.-1.1$  & 5 & 4 & 14 \\ \hline

\end{tabular}}
\caption{This is the list of knots with $mincol_p(K)$ supported on
the lune-free diagrams, obtained by using Algorithm 1. Every knot is
given by its classical symbol, Conway symbol, $p$, the lune-free
diagram supporting $mincol_p(K)$, and with $mincol_p(K)$. The knots
where the lune-free diagram supporting $LFC_p(K)$ is omitted have
such diagram with more than $n=16$ crossings.}
\label{Ta:Slavik1}
\end{center}
\end{table}
\normalsize

\bigskip

\begin{figure}[th]
\centerline{\psfig{file=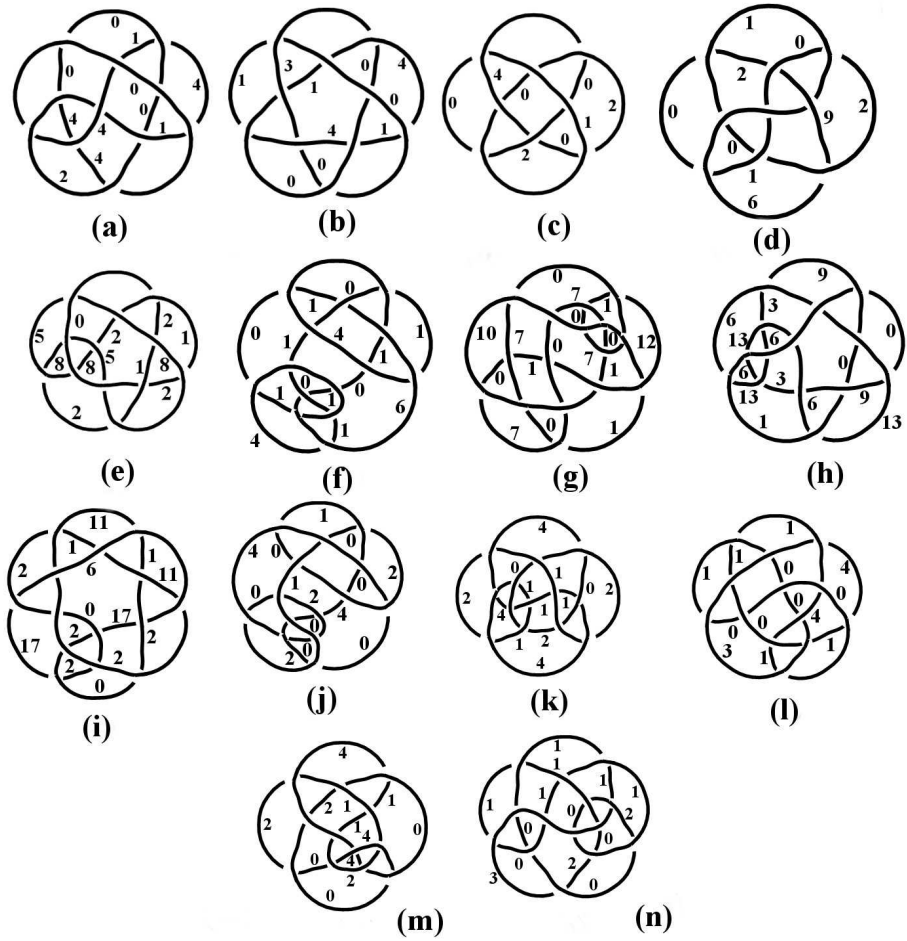,width=4.80in}} \vspace*{8pt}
\caption{Lune-free diagram of the knots (a) $4_1$; (b) $5_1$; (c)
$5_2$; (d) $6_2$; (e) $6_3$; (f) $7_1$; (g) $7_3$; (h) $7_5$; (i)
$7_6$; (j) $9_{42}$; (k) $10_{132}$; (l) $10_{161}$; (m) $11n19$;
(n) $11n135$ with $LFC_p(K)$ supported on them. \label{ff4}}
\end{figure}

%\begin{figure}[!ht]
%    \psfrag{a}{\huge\bf{(a)}}
%    \psfrag{b}{\huge\bf{(b)}}
%    \psfrag{c}{\huge\bf{(c)}}
%    \psfrag{d}{\huge\bf{(d)}}
%    \psfrag{e}{\huge\bf{(e)}}
%    \psfrag{f}{\huge\bf{(f)}}
%    \psfrag{g}{\huge\bf{(g)}}
%    \psfrag{h}{\huge\bf{(h)}}
%    \psfrag{i}{\huge\bf{(i)}}
%    \psfrag{j}{\huge\bf{(j)}}
%    \psfrag{k}{\huge\bf{(k)}}
%    \psfrag{l}{\huge\bf{(l)}}
%    \psfrag{m}{\huge\bf{(m)}}
%    \psfrag{n}{\huge\bf{(n)}}
%    \psfrag{0}{\huge$0$}
%    \psfrag{1}{\huge$1$}
%    \psfrag{2}{\huge$2$}
%   \psfrag{5}{\huge$5$}
%    \psfrag{5m}{\huge$\mathbf{5}$}
%    \psfrag{6}{\huge$6$}
%    \psfrag{7}{\huge$7$}
%    \psfrag{8}{\huge$8$}
%    \psfrag{9}{\huge$9$}
%    \psfrag{10}{\huge$10$}
%    \psfrag{11}{\huge$11$}
%    \psfrag{12}{\huge$12$}
%    \psfrag{13}{\huge$13$}
%    \psfrag{17}{\huge$17$}
%    \centerline{\scalebox{.3}{\includegraphics{coloringsv2pl.eps}}}
%    \caption{Lune-free diagram of the knots (a) $4_1$; (b) $5_1$; (c)
%$5_2$; (d) $6_2$; (e) $6_3$; (f) $7_1$; (g) $7_3$; (h) $7_5$; (i)
%$7_6$; (j) $9_{42}$; (k) $10_{132}$; (l) $10_{161}$; (m) $11n19$;
%(n) $11n135$ with $LFC_p(K)$ supported on them. The circled integer close to a diagram stands for the modulus with respect to which the diagram is being %colored.}\label{ff4}
%\end{figure}
%\bigbreak

\bigskip

%In the following table every knot $K$ from the Rolfsen tables
%\cite{Rolfsen} is given in the classical notation, by its Conway
%symbol, and $LFC(K)$. From the Conway symbol or its DT code is easy
%to make drawings of the corresponding lune-free diagrams.

\begin{figure}[th]
\centerline{\psfig{file=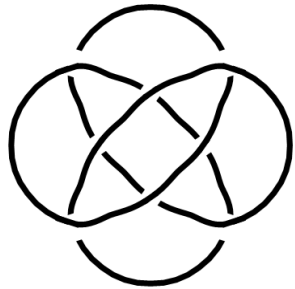,width=1.50in}} \vspace*{8pt}
\caption{Lune-free diagram of the knot $4_1$ with $n=8$ crossings.
Hence, $LFC(4_1)=8$ (compare with Fig. 9). \label{ff3}}
\end{figure}

%\begin{landscape}

%\tiny

\begin{table}[h!]
\begin{center}
\scalebox{.93}{\begin{tabular}{| c | c | c ||| c | c | c |||  c |  c | c ||| c | c | c | }\hline
$K$ & $Con$ & $LFC$ &     $K$ & $Con$ & $LFC$ &      $K$ & $Con$ & $LFC$ &     $K$ & $Con$ & $LFC$ \\ \hline \hline
$3_{1}$ & $3$ & $8$ &           $8_{8}$ & $2\,3\,1\,2$ & $11$ &       $9_{8}$ & $2\,4\,1\,2$ & $14$ &           $9_{29}$ & $.2.2\,0.2$ & $14$        \\ \hline
$4_{1}$ & $2\,2$ & $8$ &        $8_{9}$ & $3\,1\,1\,3$ & $10$ &       $9_{9}$ & $4\,2\,3$ & $14$ &              $9_{30}$ & $2\,1\,1,2\,1,2$ & $13$     \\ \hline
$5_{1}$ & $5$ & $8$ &           $8_{10}$ & $2\,1,3,2$ & $13$ &        $9_{10}$ & $3\,3\,3$ &$14$ &              $9_{31}$ & $2\,1\,1\,1\,1\,1\,2$ & $11$ \\ \hline
$5_{2}$ & $3\,2$ & $8$ &        $8_{11}$ & $3\,2\,1\,2$ & $12$   &    $9_{11}$ & $4\,1\,2\,2$ & $14$ &          $9_{32}$ & $.2\,1.2\,0$ & $11$   \\ \hline
$6_{1}$ & $4\,2$ & $9$ &        $8_{12}$ & $2\,2\,2\,2$ &  $12$ &     $9_{12}$ & $4\,2\,1\,2$ & $14$ &          $9_{33}$ & $.2\,1.2$ & $11$   \\ \hline
$6_{2}$ & $3\,1\,2$ & $9$   &   $8_{13}$ & $3\,1\,1\,1\,2$ & $11$ &   $9_{13}$ & $3\,2\,1\,3$ & $14$ &          $9_{34}$ & $8^*2\,0$ & $11$  \\ \hline
$6_{3}$ & $2\,1\,1\,2$ & $8$ &  $8_{14}$ &  $2\,2\,1\,1\,2$ &  $12$ & $9_{14}$ & $4\,1\,1\,1\,2$ & $12$ &       $9_{35}$ & $3,3,3$ & $15$  \\ \hline
$7_{1}$ & $7$ & $11$ &          $8_{15}$ & $2\,1,2\,1,2$ & $12$ &     $9_{15}$ & $2\,3\,2\,2$ & $14$ &          $9_{36}$ & $2\,2,3,2$ & $14$  \\ \hline
$7_{2}$ & $5\,2$ & $11$ &       $8_{16}$ & $.2.2\,0$ & $11$   &       $9_{16}$ & $3,3,2+$ & $14$ &              $9_{37}$ & $2\,1,2\,1,3$ & $14$   \\ \hline
$7_{3}$ & $4\,3$ & $11$ &       $8_{17}$ & $.2.2$ & $10$ &            $9_{17}$ & $2\,1\,3\,1\,2$ & $12$ &       $9_{38}$ & $.2.2.2$ & $14$ \\ \hline
$7_{4}$ & $3\,1\,3$ & $11$ &    $8_{18}$ & $8^*$ & $8$ &              $9_{18}$ & $3\,2\,2\,2$ & $14$ &          $9_{39}$ & $2:2:2\,0$ & $12$  \\ \hline
$7_{5}$ & $3\,2\,2$ & $11$ &    $8_{19}$ & $3,3,-2$ & $9$ &           $9_{19}$ & $2\,3\,1\,1\,2$ & $12$ &       $9_{40}$ & $9^*$ & $9$  \\ \hline
$7_{6}$ & $2\,2\,1\,2$ & $11$ & $8_{20}$ & $3,2\,1,-2$ & $10$ &       $9_{20}$ & $3\,1\,2\,1\,2$ & $13$ &       $9_{41}$ & $2\,0:2\,0:2\,0$ & $12$   \\ \hline
$7_{7}$ & $2\,1\,1\,1\,2$ & $9$ & $8_{21}$ &  $2\,1,2\,1,-2$ & $9$  & $9_{21}$ & $3\,1\,1\,2\,2$ & $14$ &       $9_{42}$ & $2\,2,3,-2$ & $11$  \\ \hline
$8_{1}$ & $6\,2$ & $12$&        $9_{1}$ & $9$ & $14$ &                $9_{22}$ & $2\,1\,1,3,2$ & $12$ &         $9_{43}$ & $2\,1\,1,3,-2$ & $11$   \\ \hline
$8_{2}$ & $5\,1\,2$ & $12$ &    $9_{2}$ & $7\, 2$ & $14$ &            $9_{23}$ & $2\,2\,1\,2\,2$ & $14$ &       $9_{44}$ & $2\,2,2\,1,-2$ & $11$ \\ \hline
$8_{3}$ & $4\,4$ & $12$ &       $9_{3}$ & $6\,3$ & $14$ &             $9_{24}$ & $2\,1,3,2+$ & $13$ &           $9_{45}$ & $2\,1\,1,2\,1,-2$ & $11$  \\ \hline
$8_{4}$ & $4\,1\,3$ & $12$ &    $9_{4}$ & $5\,4$ & $14$ &             $9_{25}$ & $2\,2,2\,1,2$ & $14$ &         $9_{46}$ & $3,3,-3$ & $11$  \\ \hline
$8_{5}$ & $3,3,2$ & $13$ &      $9_{5}$ & $5\,1\,3$ & $14$ &          $9_{26}$ & $3\,1\,1\,1\,1\,2$ & $11$ &    $9_{47}$ & $8^*-2\,0$ & $11$  \\ \hline
$8_{6}$ & $3\,3\,2$ & $12$  &   $9_{6}$ & $5\,2\,2$ & $14$ &          $9_{27}$ & $2\,1\,2\,1\,1\,2$ & $13$ &    $9_{48}$ & $2\,1,2\,1,-3$ & $12$  \\ \hline
$8_{7}$ & $4\,1\,1\,2$ & $11$ & $9_{7}$ & $3\,4\,2$ & $14$ &          $9_{28}$ & $2\,1,2\,1,2+$ & $13$ &        $9_{49}$ & $-2\,0:-2\,0:-2\,0$ & $12$  \\ \hline
\end{tabular}}
\caption{In this table and the next two, we present $LFC(K)$ for each knot $K$ from the Rolfsen tables
\cite{Rolfsen}  (in the classical notation and in Conway's). From the Conway symbol or its DT code is easy
to make drawings of the corresponding lune-free diagrams.}
\label{Ta:Slavik2}
\end{center}
\end{table}

\begin{table}[h!]
\begin{center}
\scalebox{.9}{\begin{tabular}{| c | c | c ||| c | c | c |||  c |  c | c ||| c | c | c | }\hline
$K$ & $Con$ & $LFC$ &     $K$ & $Con$ & $LFC$ &      $K$ & $Con$ & $LFC$ &     $K$ & $Con$ & $LFC$ \\ \hline \hline
$10_{1}$ & $8\,2$ & $15$  &    $10_{22}$ & $3\,3\,1\,3$ & $15$ &   $10_{43}$ & $2\,1\,2\,2\,1\,2$ & $14$ &      $10_{64}$ & $3\,1,3,3$ & $15$     \\ \hline
$10_{2}$ & $7\,1\,2$ & $15$ &  $10_{23}$ & $3\,3\,1\,1\,2$ &$14$ &  $10_{44}$ & $2\,1\,2\,1\,1\,1\,2$ & $14$&   $10_{65}$ & $3\,1,2\,1,3$ & $14$ \\ \hline
$10_{3}$ & $6\,4$ & $15$ &$10_{24}$ & $3\,2\,3\,2$ & $15$ &  $10_{45}$ & $2\,1\,1\,1\,1\,1\,1\,2$ & $12$  &  $10_{66}$ & $3\,1,2\,1,2\,1$ & $15$\\ \hline
$10_{4}$ & $6\,1\,3$ & $15$ &  $10_{25}$ & $3\,2\,2\,1\,2$& $15$ & $10_{46}$ & $5,3,2$ & $16$ &                 $10_{67}$ & $2\,2,2\,1,3$ & $15$ \\ \hline
$10_{5}$ & $6\,1\,1\,2$ &$14$& $10_{26}$ & $3\,2\,1\,1\,3$ & $15$ &  $10_{47}$ & $2\,1,5,2$ & $15$ &           $10_{68}$ & $2\,1\,1,3,3$ & $15$\\ \hline
$10_{6}$ & $5\,3\,2$ & $15$  & $10_{27}$ &$3\,2\,1\,1\,1\,2$ & $14$ &  $10_{48}$ & $4\,1,3,2$ & $15$&      $10_{69}$ & $2\,1\,1,2\,1,2\,1$ & $14$ \\ \hline
$10_{7}$ & $5\,2\,1\,2$ &$15$& $10_{28}$ & $3\,1\,3\,1\,2$ & $14$ & $10_{49}$ & $4\,1,2\,1,2$ & $15$ &      $10_{70}$ & $2\,2,3,2+$ & $15$  \\ \hline
$10_{8}$ & $5\,1\,4$ & $15$ &  $10_{29}$ & $3\,1\,2\,2\,2$ & $15$ &    $10_{50}$ & $3\,2,3,2$ & $15$ &      $10_{71}$ & $2\,2,2\,1,2+$ & $14$ \\ \hline
$10_{9}$ & $5\,1\,1\,3$ & $15$& $10_{30}$ & $3\,1\,2\,1\,1\,2$& $15$ & $10_{51}$ & $3\,2,2\,1,2$ & $14$ &  $10_{72}$ & $2\,1\,1,3,2+$ & $14$ \\ \hline
$10_{10}$&$5\,1\,1\,1\,2$&$14$& $10_{31}$ & $3\,1\,1\,3\,2$ & $14$ &   $10_{52}$ & $3\,1\,1,3,2$   & $15$ &   $10_{73}$ & $2\,1\,1,2\,1,2+$ & $14$\\ \hline
$10_{11}$ & $4\,3\,3$ & $15$ & $10_{32}$ & $3\,1\,1\,1\,2\,2$ & $15$ & $10_{53}$ & $3\,1\,1,2\,1,2$ & $15$ & $10_{74}$ & $2\,1,3,3+$ & $15$ \\ \hline
$10_{12}$ & $4\,3\,1\,2$ & $14$&$10_{33}$ & $3\,1\,1\,1\,1\,3$ & $12$ & $10_{54}$ &      $2\,3,3,2$ & $15$ &   $10_{75}$ & $2\,1,2\,1,2\,1+$ & $14$   \\ \hline
$10_{13}$ & $4\,2\,2\,2$ &$15$ &  $10_{34}$ & $2\,5\,1\,2$ & $14$ &  $10_{55}$ & $2\,3,2\,1,2$ & $15$ &       $10_{76}$ & $3,3,2++$ & $16$    \\ \hline
$10_{14}$ & $4\,2\,1\,1\,2$&$15$& $10_{35}$ & $2\,4\,2\,2$ & $15$ &  $10_{56}$ & $2\,2\,1,3,2$ & $15$   & $10_{77}$ & $2\,1,3,2++$ & $15$ \\ \hline
$10_{15}$ & $4\,1\,3\,2$ & $14$ & $10_{36}$ & $2\,4\,1\,1\,2$ & $15$ &  $10_{57}$ & $2\,2\,1,2\,1,2$ & $14$ &  $10_{78}$ & $2\,1,2\,1,2++$ & $15$ \\ \hline
$10_{16}$ & $4\,1\,2\,3$ & $15$ & $10_{37}$ & $2\,3\,3\,2$ & $14$ &  $10_{58}$ & $2\,2,2\,2,2$ & $15$ &   $10_{79}$ & $(3,2)\,(3,2)$ & $15$\\ \hline
$10_{17}$ & $4\,1\,1\,4$ & $14$ & $10_{38}$ & $2\,3\,1\,2\,2$ & $15$ & $10_{59}$ & $2\,1\,1,2\,2,2$ & $15$ & $10_{80}$ & $(3,2)\,(2\,1,2)$ & $15$  \\ \hline
$10_{18}$ & $4\,1\,1\,2\,2$ & $15$ &$10_{39}$ & $2\,2\,3\,1\,2$ & $15$ & $10_{60}$& $2\,1\,1,2\,1\,1,2$ & $14$ & $10_{81}$ & $(2\,1,2)\,(2\,1,2)$ & $14$\\ \hline
$10_{19}$ & $4\,1\,1\,1\,3$ & $13$ & $10_{40}$ &$2\,2\,2\,1\,1\,2$ & $14$ & $10_{61}$ & $4,3,3$ & $16$ &        $10_{82}$ & $.4.2$ & $13$ \\ \hline
$10_{20}$ & $3\,5\,2$ & $15$ &  $10_{41}$ & $2\,2\,1\,2\,1\,2$ & $14$ &  $10_{62}$ & $2\,1,4,3$ & $15$ &      $10_{83}$ & $.3\,1.2$ & $13$  \\ \hline
$10_{21}$ & $3\,4\,1\,2$ & $15$&  $10_{42}$ & $2\,2\,1\,1\,1\,1\,2$ & $14$ & $10_{63}$ & $2\,1,2\,1,4$ & $15$&  $10_{84}$  & $.2\,2.2$ & $13$  \\ \hline
\end{tabular}}
\caption{$LFC(K)$ for each knot $K$ from the Rolfsen tables \cite{Rolfsen} (cont'd).}
\label{Ta:Slavik2a}
\end{center}
\end{table}

\begin{table}[h!]
\begin{center}
\scalebox{.8}{\begin{tabular}{| c | c | c ||| c | c | c |||  c |  c | c ||| c | c | c | }\hline
$K$ & $Con$ & $LFC$ &     $K$ & $Con$ & $LFC$ &      $K$ & $Con$ & $LFC$ &     $K$ & $Con$ & $LFC$ \\ \hline \hline
$10_{85}$ & $.4.2\,0$ & $13$ & $10_{106}$ & $3\,0:2:2\,0$ & $15$ & $10_{127}$ & $4\,1,2\,1,-2$ & $13$ &$10_{148}$ & $(3,2)\,(3,-2)$ & $13$ \\ \hline
$10_{86}$ & $.3\,1.2\,0$ & $12$ & $10_{107}$ & $2\,1\,0:2:2\,0$ & $14$ & $10_{128}$ & $3\,2,3,-2$ & $12$ &  $10_{149}$ & $(3,2)\,(2\,1,-2)$ & $12$  \\ \hline
$10_{87}$ & $.2\,2.2\,0$ & $13$ & $10_{108}$ & $3\,0:2\,0:2 0$ & $13$ & $10_{129}$ & $3\,2,2\,1,-2$ & $11$ &$10_{150}$ & $(2\, 1,2)\,(3-2)$ & $12$   \\ \hline
$10_{88}$ & $.2\,1.2\,1$ & $12$ & $10_{109}$ & $2.2.2.2$ & $14$ &  $10_{130}$ & $3\,1\,1,3,-2$ & $13$ & $10_{151}$ & $(2\,1,2)\,(2\,1,-2)$ & $13$   \\ \hline
$10_{89}$ & $.2\,1.2\,1\,0$ & $12$ & $10_{110}$ & $2.2.2.2\,0$ & $15$ & $10_{131}$ & $3\,1\,1,2\,1,-2$ & $12$&   $10_{152}$ & $(3,2)\,-(3,2)$ & $14$  \\ \hline
$10_{90}$ & $.3.2.2$ & $15$ & $10_{111}$ & $2.2.2\,0.2$ & $15$ & $10_{132}$ & $2\,3,3,-2$ & $11$ & $10_{153}$ & $(3,2)\,-(2\,1,2)$ & $13$  \\ \hline
$10_{91}$ & $.3.2.2\,0$ & $14$ & $10_{112}$ & $8^*3$ & $13$ &  $10_{133}$ & $2\,3,2\,1,-2$ & $12$ & $10_{154}$ & $(2\,1,2)\,-(2\,1,2)$ & $12$   \\ \hline
$10_{92}$ & $.2\,1.2.2\,0$ & $14$ &  $10_{113}$ & $8^*2\,1$ & $12$ & $10_{134}$ & $2\,2\,1,3,-2$ & $12$ & $10_{155}$ & $-3:2:2$ & $13$  \\ \hline
$10_{93}$ & $.3.2\,0.2$ & $15$ & $10_{114}$ & $8^*3\,0$ & $13$ &  $10_{135}$ & $2\,2\,1,2\,1,-2$ & $13$ & $10_{156}$ & $-3:2:2 0$ & $11$  \\ \hline
$10_{94}$ & $.3\,0.2.2$ & $15$    &  $10_{115}$ & $8^*2\,0.2\,0$ & $12$ & $10_{136}$ & $2\,2,2\,2,-2$ & $11$ & $10_{157}$ & $-3:2\,0:2\,0$ & $12$ \\ \hline
$10_{95}$ & $.2\,1\,0.2.2$ & $14$ & $10_{116}$ & $8^*2:2$ & $13$ &   $10_{137}$ & $2\,2,2\,1\,1,-2$ & $12$ &   $10_{158}$ & $-3\,0:2:2$ & $13$  \\ \hline
$10_{96}$ & $.2.2\,1.2$ & $15$ & $10_{117}$ & $8^*2:2\,0$ & $12$ &  $10_{138}$ & $2\,1\,1,2\,1\,1,-2$ & $12$ &$10_{159}$ & $-3\,0:2:2\,0$ & $11$  \\ \hline
$10_{97}$ & $.2.2\,1\,0.2$ & $15$ &  $10_{118}$ & $8^*2:.2$ & $12$ &  $10_{139}$ & $4,3,-2\,-1$ & $13$ & $10_{160}$ & $-3\,0:2\,0:2\,0$ & $12$  \\ \hline
$10_{98}$ & $.2.2.2.2\,0$  & $15$ & $10_{119}$ & $8^*2:.2\,0$ & $13$ & $10_{140}$ & $4,3,-3$ & $11$ &  $10_{161}$ & $3:-2\,0:-2\,0$ & $12$  \\ \hline
$10_{99}$ & $.2.2.2\,0.2\,0$ & $15$ & $10_{120}$ & $8^*2\,0::2\,0$ & $15$ & $10_{141}$ & $4,2\,1,-3$ & $11$ &$10_{162}$&$-3\,0:-2\,0:-2\,0$ & 13  \\ \hline
$10_{100}$ & $3:2:2$  & $15$ & $10_{121}$ & $9^*2\,0$ & $12$ &  $10_{142}$ & $3\,1,3,-2\,-1$ & $12$ & $10_{163}$&$8^*-3\,0$ & 11 \\ \hline
$10_{101}$ & $2\,1:2:2$ & $15$ & $10_{122}$ & $9^*.2\,0$ & $13$ &  $10_{143}$ & $3\,1,3,-3$ & $11$ & $10_{164}$&$8^*2:-2\,0$& 11   \\ \hline
$10_{102}$ & $3:2:2\,0$ & $15$ & $10_{123}$ & $10^*$ & $10$ & $10_{144}$ & $3\,1,2\,1,-3$ & $12$ &  $10_{165}$&$8^*2:.-2\,0$& $12$ \\ \hline
$10_{103}$ & $3\,0:2:2$ & $14$  & $10_{124}$ & $5,3,-2$ & $11$ &  $10_{145}$ & $2\,2,3,-2\,-1$ & $12$ &   - & - & - \\ \hline
$10_{104}$ & $3:2\,0:2\,0$ & $13$ & $10_{125}$ & $5,2\,1,-2$ & $13$ & $10_{146}$ & $2\,2,2\,1,-3$ & $11$ & - & - & -  \\ \hline
$10_{105}$ & $2\,1:2\,0:2\,0$ & $14$ & $10_{126}$ & $4\,1,3,-2$ & $13$ & $10_{147}$ & $2\,1\,1,3,-3$ & $12$ &  -  & - & - \\ \hline
\end{tabular}}
\caption{$LFC(K)$ for each knot $K$ from the Rolfsen tables \cite{Rolfsen} (concl).}
\label{Ta:Slavik2b}
\end{center}
\end{table}

\section{Acknowledgements}\label{sect:orgackn}

\noindent

S.J. thanks for support through project no. 174012 financed by the Serbian Ministry of Education, Science and Technological Development.

P.L. acknowledges support from FCT (Funda\c c\~ao para a Ci\^encia e a Tecnologia), Portugal, through project FCT EXCL/MAT-GEO/0222/2012, ``Geometry and Mathematical Physics''.


\begin{thebibliography}{99}

\bibitem{BrinkmannMcKay} G. Brinkmann and B. McKay, \emph{plantri}, {\tt http://cs.anu.edu.au/$\sim$bdm/plantri/}


\bibitem{Caudron}
A. Caudron,   \emph{Classification des n\oe uds et des
enlacements}, Public. Math. d'Orsay 82. Univ. Paris Sud, Dept.
Math., Orsay, 1982.



\bibitem{CFox}
    R. Crowell, R. Fox,  \emph{Introduction to knot theory}, Dover Publications, 2008

\bibitem{Conway}  Conway, J. H. An enumeration of knots and links, and some of their algebraic properties. 1970 Computational Problems in Abstract Algebra (Proc. Conf., Oxford, 1967) pp. 329Ð358 Pergamon, Oxford.

\bibitem{elhamdadi} M. Elhamdadi, J. MacQuarrie, R. Restrepo, \emph{Automorphism groups of quandles}, J. Algebra Appl., {\bf 11} (2012), no. 1, 1250008, 9 pp.
\bibitem{GJKL} J. Ge, S. Jablan, L. Kauffman, P. Lopes, \emph{Equivalence classes of colorings}, accepted in Knots in Poland III (2010), vol. III,
 Proceedings Banach Center Publications, vol. 103

\bibitem{EliahouHararyKauffman} S. Eliahou, F. Harary, L. Kauffman, \emph{Lune-free knot graphs}, J. Knot Theory Ramifications, {\bf 17} (2008), no. 1, 55--74.

\bibitem{Frank} F. Harary, L. Kauffman, \emph{Knots and graphs. I. Arc graphs and colorings}, Adv. in Appl. Math. {\bf 22}
(1999), no. 3, 312-337


\bibitem{JablanSazdanovic} S.  V. Jablan, R. Sazdanovi\' c,
LinKnot- Knot Theory by Computer. World Scientific, New Jersey,
London, Singapore, 2007,  {\tt http://math.ict.edu.rs/}


\bibitem{VKT} L. Kauffman,  Virtual Knot Theory , European J. Comb. (1999) Vol. 20, 663-690.



\bibitem{kl}
       L. H. Kauffman, P. Lopes, \emph{On the minimum number of colors for knots},
Adv. in Appl. Math., {\bf40} (2008), no. 1, 36-53


\bibitem{klgame} L. Kauffman, P. Lopes, \emph{The Teneva game}, J. Knot Theory Ramifications, {\bf21} (2012), no. 14, 1250125 (17 pages)




\bibitem{Kirkman}
T. P. Kirkman,  \emph{The enumeration, description and construction
of knots of fewer than ten crossings},  Trans. Roy. Soc. Edinburgh,
{\bf 32} (1885a), 281--309.

\bibitem{Kirkman1}
T. P. Kirkman,  \emph{The 364 unifilar knots of ten crossings,
enumerated and described},  Trans. Roy. Soc. Edinburgh, {\bf 32},
(1885b) 483--491.



\bibitem{pLopesqft1}
        P. Lopes, \emph{Quandles at finite temperatures I},
        J. Knot Theory Ramifications, {\bf12} (2003), no. 2, 159-186



\bibitem{lopesp11} P. Lopes, \emph{On the Minimum Number of Colors for Links:
Change of Behavior at p=11},  arXiv:1308.6054 , submitted



\bibitem{lm}
        P. Lopes, J. Matias, \emph{Minimum number of Fox colors for small primes}, J. Knot Theory Ramifications, {\bf 21} (2012), no. 3, 1250025 (12 pages)


\bibitem{lm1}
        P. Lopes, J. Matias, \emph{Minimum Number of Colors: the Turk's Head Knots Case Study}, arXiv:1002.4722,  submitted


\bibitem{msolis}
       T. Mattman, P. Solis, \emph{A proof of the Kauffman-Harary conjecture},  Algebr. Geom. Topol. {\bf 9} (2009), 2027--2039



\bibitem{Oshiro} K. Oshiro, \emph{Any 7-colorable knot can be colored by four colors}, J.  Math. Soc. Japan, {\bf62}, no. 3 (2010), 963--973


\bibitem{Rolfsen}
        D. Rolfsen, \emph{Knots and links}, AMS Chelsea Publishing, 2003





\bibitem{Saito} M. Saito, \emph{The minimum number of Fox colors and quandle cocycle invariants},
 J. Knot Theory Ramifications, {\bf 19}, no. 11 (2010),  1449--1456


\bibitem{satoh} S. Satoh, \emph{5-colored knot diagram with four colors}, Osaka J. Math., {\bf46}, no. 4 (2009), 939--948





\end{thebibliography}
\end{document}